\documentclass[12pt,letterpaper]{amsart}
\setlength{\textheight}{22.5cm} \setlength{\textwidth}{6.7in}
\setlength{\topmargin}{0pt} \setlength{\evensidemargin}{1pt}
\setlength{\oddsidemargin}{1pt} \setlength{\headsep}{10pt}
\setlength{\parskip}{1mm} \setlength{\parindent}{4mm}

\bibliographystyle{apalike}
\usepackage{graphicx}
\usepackage[linktocpage=true,colorlinks,citecolor=blue,linkcolor=blue,urlcolor=blue]{hyperref}
\usepackage{amssymb}
\usepackage{cite}
\usepackage{amsmath}
\usepackage{latexsym}
\usepackage[colorinlistoftodos]{todonotes}
\usepackage{amscd}
\usepackage{tikz}
\usepackage{amsthm}
\usepackage{mathrsfs}
\usepackage{url}
\usepackage[utf8]{inputenc}
\usepackage[english]{babel}
\usepackage{amsfonts}
\usepackage{mathtools}
\usepackage{comment} 
\usepackage[autostyle]{csquotes}
\usepackage{thm-restate}
\usepackage{todonotes}

\vfuzz2pt 
\hfuzz2pt 

\numberwithin{equation}{section}
\setcounter{section}{0}

\def\R{\mathbb R}
\def\Z{\mathbb Z}
\def\C{\mathbb C}
\def\P{\mathbb P}
\def\N{\mathbb N}
\def\E{\mathbb E}
\def\d{\mathrm d}
\def\CA{\mathcal A}
\def\CE{\mathcal E}
\def\CF{\mathcal F}

\def\ee{\varepsilon}

\def\F{\mathbb {F}}
\def\a{\alpha}
\def\D{\Delta}
\def\eps{\varepsilon}
\def\EE{\mathsf{E}}

\newtheorem{theorem}{Theorem}[section]
\newtheorem{lemma}[theorem]{Lemma}
\newtheorem{proposition}[theorem]{Proposition}

\newtheorem{corollary}[theorem]{Corollary}

\theoremstyle{remark}
\newtheorem{remark}[theorem]{Remark}

\theoremstyle{definition}

\theoremstyle{remark}

\numberwithin{equation}{section}

\begin{document}
\title{On the random Chowla conjecture}
\author{Oleksiy Klurman}
\address{Klurman: School of Mathematics, University of Bristol, United Kingdom 
}
\email{lklurman@gmail.com}

\author{ILYA D. SHKREDOV}
\address{Shkredov: Steklov Mathematical Institute,\\
ul. Gubkina, 8, Moscow, Russia, 119991
\\
and
\\
IITP RAS  \\
Bolshoy Karetny per. 19, Moscow, Russia, 127994\\
and 
\\
MIPT \\ 
Institutskii per. 9, Dolgoprudnii, Russia, 141701}
\email{ilya.shkredov@gmail.com}

\author{Max Wenqiang Xu}
\address{Xu: Department of Mathematics, Stanford University, Stanford, CA, USA}
\email{maxxu@stanford.edu}

\begin{abstract}
We show that for a Steinhaus random multiplicative function $f:\mathbb{N}\to\mathbb{D}$ and any polynomial $P(x)\in\mathbb{Z}[x]$ of $\deg P\ge 2$ which is not of the form $w(x+c)^{d}$ for some $w\in \mathbb{Z}$, $c\in \mathbb{Q}$, we have
\[\frac{1}{\sqrt{x}}\sum_{n\le x} f(P(n)) \xrightarrow{d} \mathcal{CN}(0,1),\]
where $\mathcal{CN}(0,1)$ is the standard complex Gaussian distribution with mean $0$ and variance $1.$ This confirms a conjecture of Najnudel in a strong form.\\
We further show that there almost surely exist arbitrary large values of $x\ge 1,$ such that $$|\sum_{n\le x} f(P(n))| \gg_{\deg P} \sqrt{x}  (\log \log x)^{1/2},$$
for any polynomial $P(x)\in\mathbb{Z}[x]$ with $\deg P\ge 2,$ which is not a product of linear factors (over $\mathbb{Q}$). This matches the bound predicted by the law of the iterated logarithm. Both of these results are in contrast with the well-known case of linear phase $P(n)=n,$ 
where the partial sums are known to behave in a non-Gaussian fashion and the corresponding sharp fluctuations are speculated to be $O(\sqrt{x}(\log \log x)^{\frac{1}{4}+\varepsilon})$ for any $\varepsilon>0$.
\end{abstract}

\maketitle
\section{ Introduction}
The main focus of the present paper is to take yet another look at one of the two most studied models of random multiplicative functions. Let $(f(p))_{p\ prime}$ be a sequence of independent uniformly distributed on the
unit circle $\{|z| = 1\}$ random variables. A
Steinhaus random multiplicative function is given by
$f(n)=\prod_{p^{\beta}\vert|n}f(p)^{\beta}.$
Similarly, let $(f(p))_{p\ prime}$ be a sequence of independent 
random variables taking values $\pm 1$ with probability $1/2,$ then a Rademacher random multiplicative function is given by $f(n) := \prod_{p\vert n}
f(p)$ for all $n$ which are square-free, and $f(n) = 0$ otherwise. In $1944,$ Wintner \cite{wintner} introduced Rademacher random multiplicative functions to model  the behaviour of the M\"{o}bius function $\mu(n),$ whereas Steinhaus random multiplicative functions are intended to model randomly selected Dirichlet characters $\chi(n),$ and Archemidian characters $n^{it}$ ($t\in\mathbb{R}$). We refer the reader to
\cite[Section 2]{GS} and the introduction of \cite{Helson}, \cite{Harperlargevalue}  for a meticulous overview of this subject.

A classical question of interest which attracted a lot of attention is to understand the distribution and the sizes of the partial sums $\sum_{n\le x}f(n)$. The fundamental difficulty stems from the fact that the values $f(n)$ and $f(m)$ are not independent whenever $(m,n)>1$ and thus the corresponding sums cannot be directly treated using tools for independent random variables.
\subsection{Distribution results.}
It is a natural guiding
conjecture 
that $\frac{1}{\sqrt{N}}\sum_{n\le N} f(n) \xrightarrow{d} \mathcal{CN}(0,1),$ where \enquote{$\xrightarrow{d}$} stands for convergence in distribution,
but Chatterjee suggested that this should not hold (in both Rademacher and Steinhaus cases). Chatterjee's 
conjecture (expressed in \cite{Hough}), was proved
by Harper \cite{Harper}, using an intricate conditioning argument. It is now a direct consequence of a more recent breakthrough work by Harper \cite{Helson} on Helson's conjecture that in fact
$\frac{1}{\sqrt{N}}\sum_{n\le N} f(n) \xrightarrow{d} 0.$
Interestingly, if one
restricts to several natural subsums,
Chatterjee and Soundararajan \cite{CS}, Harper \cite{Harper} and Hough \cite{Hough} established central limit theorems. It remains a deep mystery whether appropriately normalized partial sums $\sum_{n\le N} f(n)$
have a limiting distribution as $N\to\infty.$

The problem considered in this note is
motivated by the celebrated conjecture of Chowla \cite{Chowla}, which states that for the Liouville (or the M\"{o}bius) function $\lambda(n)$ and any polynomial $P(x) \in\mathbb{Z}[x],$ which is not of the form $P(x)=cg^2(x)$ for some $g\in\mathbb{Z}[x],$
\[\sum_{n\le x}\lambda(P(n))=o(x).\]
The case $k=1$ corresponds to the prime number theorem but
the general case is widely open for any polynomial with $\deg P\ge 2.$ Some remarkable progress has been recently made in the case $P(x)=\prod_{k=1}^n(a_kx+b_k)$ and $a_i,b_i\in\mathbb{Z}$ (albeit with a logarithmic weight) thanks to the combination of several works by Tao \cite{Ta1}, Matom\"{a}ki-Radziwi\l \l-Tao \cite{MRT}, Tao and Ter\"{a}v\"{a}inen \cite{TT}, and more recently by Helfgott and Radziwi\l \l \cite{HR}. A weaker statement, that $\lambda(P(n))$ changes sign infinitely often has been obtained by Cassaigne-Ferenczi-Mauduit-Rivat-S\'{a}rk\"{o}zy \cite{CFMRS}, Borwein-Choi-Ganguly \cite{BCG}, and more recently by Ter\"{a}v\"{a}inen \cite{TChowla} for a special class of polynomials $P(x)\in\mathbb{Z}[x].$

Prior to our work, we are aware of no unconditional results for Chowla's conjecture in the context of random multiplicative functions for any polynomial of $\deg P\ge 2.$ It has been previously speculated and  Najnudel \cite{Naj} conjectured that if $P(x)=x(x+1)$ (and more generally, if $P(x)=\prod_{i=1}^k(x+a_i)^{m_i}$) then the convergence in distribution
\[\frac{1}{\sqrt{N}} \sum_{n\le N}  f(n)f(n+1) \xrightarrow{d} \mathcal{CN}(0,1),\]
must hold for $f$ being a Steinhaus random multiplicative function and reformulated this conjecture {\it in terms of showing that certain family of Diophantine equations possess only trivial solutions}. Such family naturally arises while computing $2k$-th moment of the left hand side for arbitrary large values of $k\ge 1.$
Our first result is an unconditional version of a central limit theorem which works for general polynomials $P(x)\in\mathbb{Z}[x].$
\begin{theorem}\label{thm: CLT}
Let $f(n)$ be a Steinhaus random multiplicative function. Then for any polynomial $P(x)$ in $\Z[x]$ with $\deg P\ge 2$ which is not of the form $P(x) = w(x+c)^{d}$ for some $w\in \Z$,  $c\in \mathbb{Q}$, as $N\to \infty$, 
\[\frac{1}{\sqrt{N}} \sum_{n\le N}  f(P(n)) \xrightarrow{d} \mathcal{CN}(0,1).  \]
\end{theorem}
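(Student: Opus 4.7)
The plan is to proceed by the method of moments. The distribution $\mathcal{CN}(0,1)$ is characterized by the mixed moments $\E[Z^a\bar{Z}^b] = a!\cdot \mathbf{1}[a=b]$, so it suffices to show that, for each fixed $k,\ell \ge 0$,
\[
\frac{1}{N^{(k+\ell)/2}}\,\E\!\left[\Bigl(\sum_{n\le N} f(P(n))\Bigr)^{k}\Bigl(\sum_{n\le N}\overline{f(P(n))}\Bigr)^{\ell}\right] \longrightarrow k!\cdot \mathbf{1}[k=\ell].
\]
Because $f$ is Steinhaus multiplicative, the orthogonality relation $\E\bigl[\prod_i f(a_i)\prod_j \overline{f(b_j)}\bigr] = \mathbf{1}\bigl[\prod_i a_i = \prod_j b_j\bigr]$ identifies the un-normalised mixed moment with the count $\mathcal{N}_{k,\ell}(N)$ of tuples $(n_1,\dots,n_k,m_1,\dots,m_\ell)\in[1,N]^{k+\ell}$ solving the Diophantine equation $\prod_i P(n_i) = \prod_j P(m_j)$. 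A size comparison using $P(n)\asymp n^{\deg P}$ shows $\mathcal{N}_{k,\ell}(N) = o(N^{(k+\ell)/2})$ when $k\ne\ell$, so only the even moments $\mathcal{N}_{k,k}(N)$ remain; and the \emph{diagonal} contribution, from tuples in which $(m_1,\dots,m_k)$ is a permutation of $(n_1,\dots,n_k)$, gives $k!\,N^k(1+o(1))$, which is exactly the required main term.

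The heart of the proof is therefore to show that the \emph{non-diagonal} solutions contribute only $o(N^k)$. The hypothesis $P(x)\ne w(x+c)^d$ is decisive here: in the excluded case the equation collapses to $\prod_i(n_i+c) = \prod_j(m_j+c)$ up to a $d$-th root of unity, which has $\gg N^k(\log N)^{k-1}$ non-diagonal solutions by classical divisor estimates, and no Gaussian limit can hold. For admissible $P$, I would factor $P(x) = c\prod_i Q_i(x)^{e_i}$ into distinct irreducibles in $\Z[x]$, where the hypothesis forces either some $Q_i$ to have degree $\ge 2$ or at least two distinct $Q_i$ to appear. The strategy is to attach to almost every $n\le N$ a \emph{large prime fingerprint} $p(n)\mid P(n)$ with $p(n) > N^\eta$ for some $\eta = \eta(\deg P) > 0$, arranged so that $p(n)\mid P(m)$ holds for at most $O(1)$ other values $m\le N$. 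Such fingerprints exist outside an exceptional set of size $o(N)$ by standard sieve and large-prime-factor results for polynomial sequences. The Diophantine equation then forces coordinate-wise matching through fingerprints, collapsing almost all solutions to the diagonal; the exceptional contribution is controlled via gcd bounds of the shape $\#\{(n,m)\in[1,N]^2:\gcd(P(n),P(m))>T\} \ll N^2/T^{\alpha}$ combined with Bilu--Tichy-type results on polynomial value equations $Q_1(x) = Q_2(y)$.

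The main obstacle is this off-diagonal estimate, and within it the most delicate case is polynomials whose irreducible factors are all linear --- such as Najnudel's motivating $P(x) = x(x+1)$ --- since a linear factor $n+a$ can share a large prime with $m+a$ for many pairs $(n,m)$, so on its own it does not provide a fingerprint of sufficient quality. For such $P$ one must extract quasi-independent large primes from the joint structure of \emph{two} distinct linear factors, exploiting that any large prime $p\mid n(n+1)$ divides exactly one of $n$ or $n+1$; the resulting shift-and-parity information is what ultimately closes the off-diagonal bound and is the technical crux of the argument.
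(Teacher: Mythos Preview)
Your approach via the method of moments is natural and is exactly the route Najnudel proposed, but the paper deliberately avoids it because the required paucity statement for $k\ge 3$ --- that non-diagonal solutions to $\prod_{i\le k} P(n_i)=\prod_{j\le k} P(m_j)$ contribute $o(N^k)$ --- is not known and appears genuinely hard. The authors say so explicitly in the outline, and your sketch does not fill this gap. The fingerprint heuristic breaks in two places. First, the claim that all but $o(N)$ values of $n\le N$ carry a prime $p(n)\mid P(n)$ with $p(n)>N^{\eta}$ is false for any fixed $\eta>0$: already for $P(x)=x(x+1)$ the set of $n$ with $n(n+1)$ being $N^{\eta}$-smooth has positive density $\sim\rho(1/\eta)^2$. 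Second, and more seriously, for the fingerprint to pin down $m$ to $O(1)$ choices you need $p(n)\gg N$, not merely $p(n)>N^{\eta}$; but $P^{+}(P(n))\gg N$ holds only for a \emph{positive proportion} of $n$ (this is the Maynard--Rudnick input the paper uses in Section~4), certainly not for almost all $n$. So the combinatorial collapse to the diagonal does not go through, and the ``technical crux'' you identify for products of linear factors is not just delicate but unresolved.

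The paper's key idea is to sidestep higher moments entirely. Writing $M_p(N)=\frac{1}{\sqrt{N/2}}\sum_{n\le N,\,P^{+}(P(n))=p} f(P(n))$ and filtering on the largest prime factor of $P(n)$ makes $(M_p(N))_p$ a martingale difference sequence, and McLeish's martingale CLT then requires only (i) a Lindeberg-type fourth-moment bound $\sum_p \E|M_p|^4=o(1)$ and (ii) a cross-term bound $\sum_{p\ne q}\E M_p^2 M_q^2\to 1$. Both conditions reduce to the single $k=2$ energy estimate $\EE^{\times}(P([N]))=2N^2+O(N^{2-\delta})$, which the paper proves (Proposition~\ref{energy}) via Bombieri--Pila on the curve $P(x)P(y)=P(X)P(Y)$ with $X+Y$ and $XY-xy$ fixed. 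In short: the paper trades the full moment hierarchy for a martingale structure that needs only the fourth moment, and that is what makes the argument unconditional.
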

This result is optimal since in the case $P(x) = w(x+c)^{d},$ for some $w\in \Z$,  $c\in \mathbb{Q}$ we have
$\frac{1}{\sqrt{N}} \sum_{n\le N}  f(\omega(x+c)^d)) \xrightarrow{d} 0,$
after noticing that $f^d(n)$ is also a Steinhaus random multiplicative function and appealing to the results in \cite{Helson}.
It is worth mentioning that the same proof allows to deduce central limit theorems for various sparse subsums. For example, with no additional effort one could show that 
\[\frac{1}{\sqrt{\pi(N)}} \sum_{p\le N}  f(P(p)) \xrightarrow{d} \mathcal{CN}(0,1).\]

\subsection{Large fluctuations.} A classical question in probability is to understand the largest fluctuations of the sums of independent random variables. If, say, $\{\xi_k\}_{k=1}^{\infty}$ is a sequence of independent Steinhaus random variables, then according to Khintchine’s law of
the iterated logarithm, we almost surely have
\begin{equation}\label{eq: Khinchine}
\limsup_{x\to\infty}\frac{|\sum_{k\le x}\xi_k|}{\sqrt{2x\log \log x}}=1.
\end{equation}
An important feature is that \eqref{eq: Khinchine} exhibits the magnitude of the global fluctuations ( that is $\sqrt{x\log \log x}$) which is substantially larger than the expected size of the partial sums at any given point $x$ (of the order $\sqrt{x}$). 

In the case of random multiplicative functions this subject has a long and rich history. In a pioneering paper Wintner \cite{wintner} studied random Dirichlet series and in the Rademacher case 
was able to exhibit an almost sure bound $\sum_{n\le x}f(n)=O(x^{1/2+\varepsilon})$
and moreover, that one almost surely $\sum_{n\le x}f(n)=O(x^{1/2-\varepsilon})$ is false. 
This was further refined by Erd\H{o}s \cite {Erdos} to show that almost surely one has the bound $O(\sqrt{x}(\log x)^A)$ and one almost surely does not have $O(\sqrt{x}(\log x)^{-B}).$ 
In a beautiful and rather influential work, Hal\'{a}sz \cite{Hal} proved an almost sure bound 
$O(\sqrt{x}\exp(A\sqrt{\log \log x\log \log \log x}))$ 
and that one almost surely does not have
$O(\sqrt{x}\exp(-B\sqrt{\log \log x\log \log \log x}))$ for some positive constants $A, B.$
Thirty years later, Lau, Tenenbaum and Wu \cite{LTW} (see also related work \cite{Bas}) sharpened the analysis of hypercontractive inequalities in Hal\'asz’s argument, establishing an almost sure upper bound $O(\sqrt{x}(\log \log x)^{3/2+\varepsilon}).$
 On the other hand, Harper \cite{Harperomega} used Gaussian process machinery 
to study suprema of random Euler products, showing that almost surely $ O(\sqrt{x}/(\log \log x)^{5/2+\ee})$ is false. Latter results 
may be seen as approximations to the law of the iterated logarithm however quantitatively substantially weaker. 
In a recent breakthrough, answering
a question of Hal\'{a}sz and proving an old conjecture of Erd\H{o}s, Harper \cite{Harperlargevalue} showed that if $f(n)$ is a Steinhaus (or Rademacher) random multiplicative
function, then almost surely $|\sum_{n\le x}f(n)|\ge \sqrt{x}(\log \log x)^{1/4-\varepsilon}$ holds for sequence of arbitrary large values of $x\ge 1.$ Remarkably, this furnishes the first bound that grows faster than $\sqrt{x}$ and moreover the exponent $1/4$ is speculated to be sharp (see also \cite{Helson}, \cite{Mostra}).

We establish a lower bound of the size $\sqrt{x\log \log x}$ matching the one predicted by the Khinchine's type law of the iterated logarithm. 
\begin{corollary}\label{Khinchine}
Let $f(n)$ be a Steinhaus random multiplicative function. Then for any polynomial $P(x)$ in $\Z[x]$ with $\deg P\ge 2$ which is not a product of linear factors (over $\mathbb{Q}$), there almost surely exists arbitrarily large $x$ such that
\begin{equation}\label{eqn: large value}
    |\sum_{n\le x} f(P(n)) | \gg_{d}  \sqrt{x\log\log x}.
\end{equation}
\end{corollary}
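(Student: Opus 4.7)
The plan is to deduce Corollary~\ref{Khinchine} from Theorem~\ref{thm: CLT} by a Borel--Cantelli argument along a rapidly growing subsequence. Set $N_k := 2^{k^2}$, $I_k := (N_{k-1}, N_k]$, and define the block sums
\[T_k := \sum_{n \in I_k} f(P(n)), \qquad S_N := \sum_{n \le N} f(P(n)),\]
so that $S_{N_k} = T_1 + \cdots + T_k$. A direct computation using $\mathbb{E}[f(a)\overline{f(b)}] = \mathbf{1}[a = b]$ yields $\mathbb{E}|T_k|^2 \asymp |I_k| \asymp N_k$, while $P(n) = P(m)$ with $n \in I_j,\ m \in I_k,\ j \ne k$ has only $O_d(1)$ solutions, giving $\mathbb{E}[T_j \overline{T_k}] = O_d(1)$.

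The key input is a \emph{joint} version of Theorem~\ref{thm: CLT}: for any fixed $K$, the vector $(T_k/\sqrt{|I_k|})_{k \le K}$ converges in distribution to $K$ i.i.d.\ standard complex Gaussians. I would prove this by extending the moment method behind Theorem~\ref{thm: CLT}: joint moments of the form $\mathbb{E}[\prod_k T_k^{a_k}\overline{T_k}^{\,b_k}]$ reduce to counting solutions of the Diophantine system $\prod_{k,i}P(n_{k,i}) = \prod_{k,j}P(m_{k,j})$ with $n_{k,i}, m_{k,j} \in I_k$; one must show that only the ``block-diagonal'' pairings (matching variables within each $I_k$ separately) survive in the limit, thereby reproducing the moments of a block-independent Gaussian vector. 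The assumption that $P$ has an irreducible factor of degree $\ge 2$---equivalent, for $\deg P \ge 2$, to $P$ not being a product of linear factors over $\mathbb{Q}$---is exactly what suppresses the anomalous cross-block solutions that would otherwise corrupt higher joint moments and spoil the Gaussian tail.

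Given the joint CLT, set $A_k := \{|T_k| \ge c\sqrt{|I_k| \log \log N_k}\}$ for a small $c > 0$. Since $\log \log N_k \sim 2\log k$, the complex Gaussian tail $\mathbb{P}(|Z| \ge t) = e^{-t^2}$ for $Z \sim \mathcal{CN}(0,1)$ gives $\mathbb{P}(A_k) \asymp k^{-2c^2}$, which is not summable for $c < 1/\sqrt{2}$. The asymptotic independence of $(A_k)$ coming from the joint CLT, combined with the Kochen--Stone lemma, yields $\mathbb{P}(\limsup_k A_k) > 0$; this is then upgraded to probability one by a zero--one type argument, for example via a conditional L\'evy--Borel--Cantelli lemma relative to the small-prime $\sigma$-algebra $\sigma(f(p): p \le N_{k-1}^{\deg P})$ and a conditional CLT for $T_k$. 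A Chebyshev plus first Borel--Cantelli argument based on $\mathbb{E}|S_{N_{k-1}}|^{2} \asymp N_{k-1}$ gives $|S_{N_{k-1}}| \le \sqrt{N_{k-1}}\cdot k$ almost surely for all large $k$, which is $o(\sqrt{N_k \log \log N_k})$ because $N_{k-1}/N_k \to 0$ super-geometrically. Consequently $|S_{N_k}| \ge |T_k| - |S_{N_{k-1}}| \gg \sqrt{N_k \log \log N_k}$ for infinitely many $k$ almost surely, proving the corollary with $x = N_k$. The main obstacle is the joint moment computation---carefully tracking how variables split across the disjoint intervals $I_k$ and verifying that only block-diagonal pairings dominate; the $0/1$ upgrade via conditional Borel--Cantelli is a secondary but delicate technical point.
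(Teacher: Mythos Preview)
Your proposal has a genuine gap: a CLT---joint or not---controls probabilities at \emph{fixed} thresholds, not at thresholds that grow with $k$. The events $A_k = \{|T_k| \ge c\sqrt{|I_k|\log\log N_k}\}$ live at scale $t_k \asymp \sqrt{\log k} \to \infty$, and the statement $\mathbb{P}(A_k) \asymp k^{-2c^2}$ is a \emph{moderate deviations} estimate, not a consequence of distributional convergence. Theorem~\ref{thm: CLT} is proved via McLeish's martingale CLT, which gives no quantitative rate, so you cannot read off tail asymptotics from it. The same problem recurs for $\mathbb{P}(A_j \cap A_k)$: to feed Kochen--Stone you need $\mathbb{P}(A_j \cap A_k) \lesssim \mathbb{P}(A_j)\mathbb{P}(A_k)$ for tail events, and the fact that $\mathbb{E}[T_j\overline{T_k}] = O_d(1)$ (mere uncorrelatedness) is far too weak for this. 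Your ``joint CLT'' as written is also ill-posed: for fixed $K$ the vector $(T_k/\sqrt{|I_k|})_{k\le K}$ is a fixed random vector with no limiting parameter, so there is nothing to converge. Finally, the zero--one upgrade is only sketched; conditioning on $\sigma(f(p):p\le N_{k-1}^{\deg P})$ does not decouple the $T_k$, since $P(n)$ for $n \in I_k$ typically shares large prime factors with $P(m)$ for $m$ in later blocks.

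The paper's route is quite different. It does not deduce the corollary from Theorem~\ref{thm: CLT} at all, but from the local statement Theorem~\ref{thm:largevaluelocal}, whose proof is the real work. There one conditions not on small primes but on all primes \emph{outside} carefully chosen disjoint sets $\mathcal{A}_i$ of very large primes (of size $\gg x_i\log x_i$), constructed so that after conditioning the dominant parts $S_{i,1}$ of the sums at different scales $x_i$ become \emph{genuinely independent} sums of independent variables. This is where the hypothesis ``not a product of linear factors'' enters---via the Maynard--Rudnick input (Lemma~\ref{lem: large divisor}) guaranteeing that a positive proportion of $n\le x_i$ have $P^+(P(n)) \gg x_i\log x_i$---rather than through any cross-block moment count. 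One then applies a normal approximation result (Lemma~\ref{lem: normal approx}) and a normal comparison inequality for maxima (Lemma~\ref{lem: normal comparison}) to $\asymp \log X$ sample points, obtaining~\eqref{eqn: large value local} with failure probability $O((\log\log X)^{-0.02})$; the corollary then follows from the \emph{first} Borel--Cantelli lemma along a sparse sequence of $X$'s.
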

In fact, we prove a more general local version and then apply the standard Borel-Cantelli type argument to deduce Corollary \ref{Khinchine}.

\begin{theorem}\label{thm:largevaluelocal} 
    Let $f(n)$ be a Steinhaus random multiplicative function and let $P(x)$ be a polynomial in $\Z[x]$ with $\deg P\ge 2$ which is not a product of linear factors (over $\mathbb{Q}$). Then uniformly for all large $X$, 
    \begin{equation}\label{eqn: large value local}
      \max_{X\le x \le X^{(\log X)^{2}}} \frac{1}{\sqrt{x}}|\sum_{n\le x} f(P(n))| \ge c_d \sqrt{\log \log X}
    \end{equation}
     with probability $1-O(\frac{1}{(\log \log X)^{0.02}})$ for some fixed $c_d>0$ depending on the degree $d=\deg P$.
\end{theorem}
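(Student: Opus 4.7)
The plan is to reduce the large-value claim to a maximum-of-approximately-independent-Gaussians computation on a well-chosen collection of scales. Theorem~\ref{thm: CLT} already identifies the limiting distribution of $S(x) := \sum_{n \le x} f(P(n))$ at a single scale as $\sqrt{x}\cdot\mathcal{CN}(0,1)$; the additional input I will need is a quantitative joint asymptotic independence across widely separated scales.

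\textbf{Step 1 (Scales and increments).} Set $N := \lfloor \log X\rfloor$ and $x_j := X\cdot\exp(j(\log X)^2)$ for $0 \le j \le N$, so that $X = x_0 < x_1 < \cdots < x_N \le X^{(\log X)^2}$ and $\log(x_j/x_{j-1}) = (\log X)^2$. Define increments $\Delta_j := S(x_j) - S(x_{j-1})$. Steinhaus orthogonality $\E[f(a)\overline{f(b)}] = \mathbf{1}_{a=b}$, together with the fact that $P(n)=P(m)$ has only $O_P(1)$ solutions with $n\ne m$, yields $\E|\Delta_j|^2 = (1+o(1)) x_j$.

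\textbf{Step 2 (Joint CLT for increments).} The central step is to upgrade the moment method behind Theorem~\ref{thm: CLT} to a quantitative joint CLT for $(\Delta_j/\sqrt{x_j - x_{j-1}})_{j=1}^N$. The mixed moment
\[\E\Bigl[\prod_{j=1}^N \Delta_j^{k_j}\,\overline{\Delta_j}^{\,\ell_j}\Bigr]\]
counts integer tuples $(n_{j,i}),(m_{j,i})$ with $n_{j,i},m_{j,i}\in (x_{j-1},x_j]$ and $\prod_{j,i} P(n_{j,i}) = \prod_{j,i} P(m_{j,i})$. The Diophantine input underlying Theorem~\ref{thm: CLT} (which crucially exploits the presence of a non-linear irreducible factor of $P$) forces all dominant contributions to be \emph{diagonal within a single scale}: each factor $P(n_{j,i})$ pairs with some $P(m_{j,i'})$ lying in the same interval $(x_{j-1},x_j]$. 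Cross-scale matches $P(n_{j,i})=P(m_{j',i'})$ with $j\ne j'$ are suppressed by the huge separation $x_j/x_{j-1} = e^{(\log X)^2}$. Consequently, the $\Delta_j$'s match, in joint moments up to some order $2k$, the independent family $\bigl(\sqrt{x_j - x_{j-1}}\,Z_j\bigr)_{j=1}^N$ with $Z_j \sim \mathcal{CN}(0,1)$ i.i.d., with a quantitative error depending on $k$ and $X$.

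\textbf{Step 3 (Max of Gaussians and passage to partial sums).} For i.i.d.\ $Z_1,\dots,Z_N \sim \mathcal{CN}(0,1)$,
\[\P\Bigl(\max_{j\le N}|Z_j| \ge c\sqrt{\log N}\Bigr) = 1 - (1-N^{-c^2})^N \ge 1 - \exp\bigl(-N^{1-c^2}\bigr),\]
so for any fixed $c\in(0,1)$ and $N\asymp\log X$ the probability tends to $1$ very rapidly. Combining with the moment comparison from Step~2 via a Markov/truncation argument (choosing $k$ in terms of the desired rate) yields a maximum of the $\Delta_j/\sqrt{x_j-x_{j-1}}$ exceeding $c_d\sqrt{\log\log X}$ with probability $1-O((\log\log X)^{-0.02})$. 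Finally, $|\Delta_j|\le |S(x_j)|+|S(x_{j-1})|$ and the super-small ratio $x_{j-1}/x_j = e^{-(\log X)^2}$ give $\max_k |S(x_k)|/\sqrt{x_k} \ge \tfrac12 \max_j |\Delta_j|/\sqrt{x_j}$, whence~\eqref{eqn: large value local} follows for some $c_d>0$ depending only on $\deg P$.

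\textbf{Main obstacle.} The crux is the quantitative joint moment estimate of Step~2: I must count solutions of polynomial Diophantine equations across multiple disjoint intervals simultaneously, strengthening the single-scale estimate of Theorem~\ref{thm: CLT} to a multi-scale version in which cross-scale matches are shown to be uniformly negligible. Although this does not require any fundamentally new Diophantine input beyond the one used for Theorem~\ref{thm: CLT}, the bookkeeping of \emph{partial} matches that straddle scale boundaries requires some care. The explicit exponent $0.02$ in the probability bound should emerge from optimising the order $k$ of the moment used in the Markov step against the quantitative rate in the joint CLT.
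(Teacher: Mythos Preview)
Your Step~2 is the genuine gap. The ``Diophantine input underlying Theorem~\ref{thm: CLT}'' is \emph{only} the $k=2$ case, i.e.\ the multiplicative energy bound of Proposition~\ref{energy}; the paper explicitly remarks (see the outline in \S1.3) that the higher energies
\[
\#\{(n_1,\dots,n_k,m_1,\dots,m_k)\in[N]^{2k}: P(n_1)\cdots P(n_k)=P(m_1)\cdots P(m_k)\}
\]
appear out of reach for any $k\ge 3$, and that McLeish's martingale CLT was invoked precisely so that $k=2$ suffices. Your joint moment comparison in Step~2, followed by the Markov/truncation argument in Step~3 with $k$ chosen to depend on $X$, requires exactly these unavailable higher-order estimates. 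Wide separation of the intervals $(x_{j-1},x_j]$ kills \emph{exact} cross-scale equalities $P(n)=P(m)$, but it does nothing to rule out genuinely multiplicative relations in the product equation, and the increments $\Delta_j$ are not independent (they all involve the same $f(p)$ for small primes $p$). A further warning sign: your argument never uses the hypothesis that $P$ is not a product of linear factors over $\mathbb{Q}$, yet Theorem~\ref{thm: CLT} holds under the weaker hypothesis $P\neq w(x+c)^d$, so your route cannot be where the extra condition enters.

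The paper's proof is structurally quite different. Rather than comparing moments of increments, it builds \emph{genuine} conditional independence by hand: using the Maynard--Rudnick input (Lemma~\ref{lem: large divisor}, which needs an irreducible non-linear factor) one constructs disjoint sets of large primes $\mathcal{A}_i$ with $p\gg x_i\log x_i$ such that each $p\in\mathcal{A}_i$ divides $P(n)$ for $O_d(1)$ values $n\le x_i$. After conditioning on $\{f(p):p\notin\bigcup_i\mathcal{A}_i\}$, the dominant pieces $S_{i,1}$ of $S(x_i)$ become sums over the independent variables $\{f(p):p\in\mathcal{A}_i\}$, and are therefore independent across $i$. One then applies a normal approximation lemma and a normal comparison lemma directly; the only moment input needed is the fourth moment (Proposition~\ref{energy}), used to show that the conditional variances concentrate. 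If you want to rescue a version of your outline, you would need to replace the higher-moment comparison by this conditioning mechanism.
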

We conclude this section by mentioning that in the deterministic case, a well-known conjecture of Gonek \cite{Ng} predicts the sharp upper bound $\sum_{n\le x}\mu(n)=O(\sqrt{x}(\log\log\log x)^{5/4}).$ In view of Theorem \ref{thm: CLT} and Corollary \ref{Khinchine} it seems reasonable to expect that the largest fluctuations of the Chowla type sums $\sum_{n\le x}\mu(P(n))$ are of the order $\sqrt{x\log \log x}$ for any admissible polynomial $P(x)\in\mathbb{Z}[x]$ of $\deg P\ge 2.$

\subsection{Outline of the proofs.} 
A standard point of departure in establishing central limit theorems is a computation of higher (integral) moments: $$\mathbb{E}|\sum_{n\le x}f(P(n))|^{2k}=\sum_{n_i,m_i\in [N]}\text{I}_{P(n_1)P(n_2)\dots P(n_k)=P(m_1)P(m_2)\dots P(m_k)}.$$
Latter naturally leads to a consideration of the {\it higher multiplicative energies} of the polynomial images, the question interesting on its own right (see Section 2 for the discussion). This seems to be a rather difficult problem as far as general polynomials $P\in\mathbb{Z}[x]$ are concerned for any $k\ge 3.$

To overcome this obstacle and prove Theorem \ref{thm: CLT}, we take advantage of the crucial feature that the partial sums $\sum_{n\le x}f(P(n))$ exhibit  structure of a martingale difference sequence. Such an observation has been previously utilized by several authors including Harper \cite{Harper}, Lau, Tenenbaum, Wu \cite{LTW} in the context of studying non-Gaussian behaviour of $\sum_{n\le N}f(n).$
After applying  McLeish's martingale central limit theorem (which we will recall in Section~\ref{Sec: CLT}), fortunately, the case $k=2$  suffices to accomplish our modest task.
To this end for subsets  $A\subseteq \R$, we introduce a multiplicative energy \cite{TV}  of the set
	$
	\EE^{\times}(A):= 
	|\{ (a_1,a_2,a_3,a_4) \in A~:~ a_1a_2=a_3a_4 \}| \,
    $
and prove the following.
	\begin{proposition}\label{energy}
	Let $N\ge 1$ be a positive integer and $P(x)\in\mathbb{Z}[x]$ be a polynomial of degree $d\ge 2$ and $P(x)\ne w (x+c)^d$ for any $w\in \Z$, $c\in \mathbb{Q}$. Then, for $d>2$ we have bounds
	\[
	\EE^{\times}(P([N]))= 2N^2 + O( N^{2-\frac{1}{2(2d-1)}+o_d(1)})
	\]
	and for $d=2$
	\[
	\EE^{\times}(P([N]))= 2N^2 + O(N^{5/3+o(1)}).\]
	\end{proposition}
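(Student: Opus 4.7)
The plan is to split $\EE^{\times}(P([N]))$ into a diagonal and a non-diagonal contribution, computing the former exactly up to lower order and bounding the latter via lattice-point counts on an auxiliary family of plane curves. Since $\deg P = d \ge 2$, the polynomial $P$ is strictly monotone on $\mathbb Z$ outside a bounded set and its fibers have size at most $d$, so $|P([N])| = N - O_P(1)$. The trivial quadruples, those $(a_1,a_2,a_3,a_4)\in P([N])^4$ with $\{a_1,a_2\}=\{a_3,a_4\}$ as multisets, therefore contribute precisely $2|P([N])|^2 - |P([N])| = 2N^2 + O(N)$, which supplies the main term.

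For the non-diagonal contribution I would parametrize each solution of $P(n_1)P(n_2)=P(n_3)P(n_4)$ by the reduced ratio $P(n_1)/P(n_3)=u/v$ with $\gcd(u,v)=1$, writing $P(n_1)=um,\ P(n_3)=vm,\ P(n_2)=vk,\ P(n_4)=uk$ for positive integers $m,k$. Setting $M(u,v):=\#\{m\ge 1 : um,\, vm \in P([N])\}$, this yields the identity
\[\EE^{\times}(P([N])) = \sum_{\gcd(u,v)=1} M(u,v)^2,\]
together with the mass constraint $\sum_{\gcd(u,v)=1} M(u,v) = |P([N])|^2$ and the diagonal value $M(1,1)=|P([N])|$. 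The task is then reduced to showing $\sum_{(u,v)\ne(1,1)} M(u,v)^2 \ll N^{2 - 1/(2(2d-1))+o(1)}$.

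The crucial geometric input is that $M(u,v)$ is bounded by the number of integer points $(x,y)\in[N]^2$ on the plane curve $C_{u,v}: vP(x)-uP(y)=0$ of total degree $d$. The hypothesis $P(x)\ne w(x+c)^d$ ensures that $C_{u,v}$ is absolutely irreducible whenever $u\ne v$ are coprime, while for $u=v$ the curve splits off the trivial line $x=y$ leaving a residual irreducible factor of degree $d-1$. Bombieri--Pila then yields the pointwise bound $M(u,v)\ll_d N^{1/d+o(1)}$. The naive Cauchy--Schwarz combination $\sum M^2 \le (\max M)(\sum M)$ gives only $N^{2+1/d+o(1)}$, which is too weak. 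To achieve the stated exponent I would run a dyadic level-set decomposition of the $(u,v)$-pairs and complement the pointwise bound with a simultaneous-incidence estimate: applying Heath--Brown's determinant method (or an $L^2$/divisor-function argument) to an auxiliary curve of degree $\asymp 2d-1$ obtained by eliminating a common variable from two curves $C_{u,v}$ and $C_{u',v'}$ controls the number of pairs $(u,v)$ supporting $\ge T$ lattice points, and balancing this against the pointwise bound produces the exponent $1/(2(2d-1))$.

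The main obstacle will be precisely this level-set/simultaneous-incidence step: Bombieri--Pila per individual curve is well understood, but the summation over the family $\{C_{u,v}\}$ requires the additional rigidity input that forces the $2d-1$ denominator, and verifying absolute irreducibility of the auxiliary variety uniformly in $(u,v)$ is where the full strength of the assumption $P\ne w(x+c)^d$ enters. For $d=2$ I would replace this machinery by a direct algebraic argument: $P$ factors over the quadratic field $\mathbb{Q}(\sqrt{D_P})$ and $P(n_1)P(n_2)=P(n_3)P(n_4)$ becomes a norm equation in the corresponding order, to which classical divisor bounds for binary quadratic forms and ideal factorization apply, yielding the sharper $N^{5/3+o(1)}$ bound.
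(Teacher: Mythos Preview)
Your parametrisation by the reduced ratio $u/v$ and the identity $\EE^\times(P([N]))=\sum_{\gcd(u,v)=1}M(u,v)^2$ are correct, but the heart of the argument---the step where you claim a level-set decomposition combined with Heath--Brown's determinant method on an ``auxiliary curve of degree $\asymp 2d-1$'' yields the exponent $1/(2(2d-1))$---is not a proof. You have not specified what the auxiliary curve is, why eliminating a variable between $C_{u,v}$ and $C_{u',v'}$ produces something of that degree, or how the resulting lattice-point bound translates into a count of pairs $(u,v)$ with $M(u,v)\ge T$. As you yourself note, the pointwise bound $M(u,v)\ll N^{1/d+o(1)}$ together with $\sum M(u,v)=N^2$ gives nothing; without a concrete mechanism the exponent $1/(2(2d-1))$ is simply reverse-engineered from the statement. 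Your irreducibility claim for $C_{u,v}$ is also not automatic (take $P(x)=x^d+c$, for which $vP(x)-uP(y)$ can split over $\C$), and the $d=2$ sketch via norm equations in a quadratic order is similarly hand-waving.

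The paper proceeds along an entirely different route that explains where the $2d-1$ comes from. After a linear change of variables one may assume $P(x)=x^d+g(x)$ with $\deg g\le d-2$; then in $P(x)P(y)=P(X)P(Y)$ the leading terms $(xy)^d$ and $(XY)^d$ dominate, forcing $s:=XY-xy$ to be small: if all four variables exceed a threshold $\Delta$ then $|s|\ll (N/\Delta)^{2d-2}$. Fixing $s$ and $t:=X+Y$ (so $O(N\cdot(N/\Delta)^{2d-2})$ choices) turns the equation into a single plane curve in $(x,y)$, to which Bombieri--Pila (applied at degree $2$, after a separate analysis disposes of possible linear factors using the hypothesis $P\ne w(x+c)^d$) gives $N^{1/2+o(1)}$ points. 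Balancing $\Delta N$ against $N\cdot(N/\Delta)^{2d-2}\cdot N^{1/2}$ gives $\Delta^{2d-1}=N^{2d-3/2}$ and hence the exponent $2-\frac{1}{2(2d-1)}$; for $d=2$ the curve bound improves to $N^{o(1)}$ and the same balance yields $5/3$. The key idea you are missing is this $(s,t)$-parametrisation exploiting the near-cancellation of leading terms, which replaces a sum over an uncontrolled family of curves $C_{u,v}$ by a sum over a \emph{short} range of a single auxiliary parameter.
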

Proposition \ref{energy} will be immediately deduced from a more general Theorem ~\ref{thm: energy}, with the key input in the proof coming from the use of a celebrated result of Bombieri--Pila \cite{BP} bounding the number of integral points on curves.

The proof of Theorem \ref{thm:largevaluelocal} heavily relies on the two probabilistic results (in the form established in ~\cite{Harper}): normal comparison and normal approximation results (Lemma \ref{lem: normal approx} and Lemma \ref{lem: normal comparison} respectively).
Roughly speaking, Harper establishes a multivariate Gaussian approximation for the sums $\sum_{X< p\le x}f(p)\sum_{n\le x/p}f(n)$ conditional 
on all the values $(f(p))_{p\le X}$, sampled at a well spaced sequence of ($\asymp \log X$) points $X^{8/7}\le x\le X^{4/3},$ thus making the inner sums fixed. The main bulk of the work then goes into analyzing the sizes of conditional variances and covariances using techniques from multiplicative chaos. In our case, we instead proceed differently and choose the set of primes to condition on more judiciously (in the spirit of greedy algorithm). The key consequence of such conditioning argument together with Proposition \ref{energy} is that the \enquote{essential} parts of the random sums at different scales $x$ become independent with conditional variance being roughly of the same size $\sqrt{x}$ as unconditional one. This might explain why our bounds match those predicted by \eqref{eq: Khinchine}.
The main arithmetic input we use here is that for any polynomial $P \in \Z[x],$ which is not a product of linear factors, the set of $n\le X$ with the largest prime factor
$P^{+}(P(n)) \gg_d n,$
has positive density.
In order to efficiently apply Borel--Cantelli type argument we need at least $(\log X)^{\delta}$ sample points on the interval $X\le x\le X^{\log^2 X}$ for some $\delta>0$ and so we apply quantitative form of the latter fact recently established by Maynard and Rudnick in \cite{MaynardRudnick} together with a simple upper bound sieve (which crucially wins the desired power of $\log X$).
\subsection{Organization of the paper and future work.}
We prove Theorem~\ref{thm: CLT} in Section~\ref{Sec: CLT} with the crucial energy bounds deferred to be proved in Section~\ref{Sec: energy}. In Section~\ref{Sec: large value}, we prove Theorem~\ref{thm:largevaluelocal} and Corollary~\ref{Khinchine}. The situation with Rademacher random multiplicative functions is more delicate. With additional effort, the methods of the present paper also work in that case for $P$ belonging to a wide class of polynomials. However for the case of $P$ with $\deg P\ge 3$ even existence of positive proportion of square-free values of $P(n)$ is only known under assumption of the ABC conjecture thanks to the work of Granville \cite{ABC}. We have decided to keep the presentation here relatively simple focusing on the main ideas rather than generality of the results. In the future work, we shall return to the study of the Rademacher case (both unconditionally and conditional on the ABC conjecture).
\subsection*{Acknowledgment.}
The authors are very grateful to Adam Harper and Alexander Mangerel for the insightful comments on an earlier draft of this manuscript. O.K. and I.S. would like to thank Max Planck Institute for Mathematics (Bonn) for providing excellent working conditions and support. O.K. would also like to express his gratitude to Mittag-Leffler Institute for mathematical research for providing stimulating working environment and support.
 M.W.X. is supported by the Cuthbert C. Hurd Graduate Fellowship in the Mathematical Sciences (Stanford).

\section{Proof of Theorem~\ref{thm: CLT}}\label{Sec: CLT}

Our main tool in this section is the following version of the central limit theorem due to McLeish \cite{McLeish}. The idea of using this result in the context of random multiplicative functions can be traced back to \cite{Harper} (see also \cite{SoundXu}). 

\begin{lemma}[McLeish]\label{Thm:McLeish}
		For $n \in \N$, suppose $k_n \in \N$, and that $X_{i,n}, 1\le i\le k_n$ is a martingale difference sequence on $(\Omega, \mathcal{F}, (\mathcal{F}_{i,n})_i, \P)$ where $\mathcal{F}_{i,n}$ is the sigma algebra generated by all $\{X_{j, n}: j\le i \}$ i.e., for any $i\ge 1$
		\[\E(X_{i+1,n}| \mathcal{F}_{i,n}) = 0. \]
		Write $S_n: = \sum_{i\le k_n} X_{i,n}$, and suppose that the following conditions hold:
		\begin{enumerate}
			\item (normalized variances) $\sum_{i\le k_n} \E X_{i,n}^{2} \to 1$ as $n\to \infty$. 
			\item (Lindeberg condition) 
$\sum_{i\le k_n} \E(|X_{i,n}|^{4}) = o_{n\to\infty}(1)$.
			\item (cross term condition) 
			$\limsup_{n\to \infty} \sum_{i,j\le k_n, i\neq j} \E X_{i,n}^{2}X_{j,n}^{2} \le 1 $.
		\end{enumerate}
		Then $S_n$ converges in distribution to a normal random variable with mean $0$ and variance $1$. 
	\end{lemma}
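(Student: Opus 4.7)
The plan is to establish convergence in distribution via characteristic functions, showing that $\E e^{itS_n}\to e^{-t^2/2}$ for every $t\in\R$, and then invoking L\'evy's continuity theorem. The crucial auxiliary object, introduced by McLeish, is
\[
T_n(t) \;:=\; \prod_{i=1}^{k_n}\bigl(1 + itX_{i,n}\bigr).
\]
Since $\E[X_{i+1,n}\mid \mathcal{F}_{i,n}]=0$, each factor has conditional mean $1$, so the partial products $\prod_{i=1}^{k}(1+itX_{i,n})$ form a martingale in $k$, giving in particular $\E T_n(t)=1$.

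The engine of the proof is a term-by-term identity derived from the Taylor expansion $e^{iy}=(1+iy)\,e^{-y^2/2}(1+O(|y|^3))$ applied with $y=tX_{i,n}$. Writing $V_n:=\sum_i X_{i,n}^2$, this yields the pointwise approximation
\[
e^{itS_n}\;=\;T_n(t)\,e^{-t^2 V_n/2}\,(1+E_n),
\]
where $|E_n|\ll_t \sum_i |X_{i,n}|^3$, and the inequality $|1+itX_{i,n}|\,e^{-t^2X_{i,n}^2/2}\le 1$ keeps every partial product on the right bounded in absolute value.

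Next, I would verify the two probabilistic inputs: $V_n\to 1$ in $L^2$, and $\sum_i|X_{i,n}|^3\to 0$ in probability. Condition (1) gives $\E V_n\to 1$, while
\[
\E V_n^2 \;=\; \sum_i \E X_{i,n}^4+\sum_{i\ne j}\E X_{i,n}^2 X_{j,n}^2 \;\le\; o(1)+1+o(1)
\]
by (2) and (3), so $\mathrm{Var}(V_n)\to 0$. For the cubic sum, one has $\sum_i |X_{i,n}|^3\le \max_i|X_{i,n}|\cdot V_n$, and $\E\max_i X_{i,n}^4\le \sum_i \E X_{i,n}^4=o(1)$ by (2); combined with the $L^2$-boundedness of $V_n$, this yields the claim.

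To finish, using $\E T_n(t)=1$ I would decompose
\[
\E e^{itS_n}-e^{-t^2/2}\;=\;\E\bigl[T_n(t)\bigl(e^{-t^2V_n/2}-e^{-t^2/2}\bigr)\bigr]+\E\bigl[T_n(t)\,e^{-t^2 V_n/2}E_n\bigr],
\]
and bound each term by Cauchy--Schwarz. The main obstacle I anticipate is the uniform integrability of $T_n(t)$: one has $|T_n(t)|^2=\prod_i(1+t^2X_{i,n}^2)\le e^{t^2V_n}$, and the $L^2$ concentration $V_n\to 1$ (together with a mild truncation if needed) should give $\sup_n \E|T_n(t)|^2<\infty$. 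Once this is in hand, the concentration of $V_n$ and the vanishing of $E_n$ drive both terms to $0$, and L\'evy's continuity theorem delivers $S_n\xrightarrow{d}\mathcal{N}(0,1)$.
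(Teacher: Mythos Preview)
The paper does not prove this lemma; it is quoted from McLeish's 1974 paper and used as a black box. Your outline is precisely McLeish's original argument via the product martingale $T_n(t)=\prod_i(1+itX_{i,n})$, and the reductions you carry out --- $V_n\to1$ in $L^2$ from (1)--(3), $\max_i|X_{i,n}|\to0$ in $L^4$ from (2), hence $\sum_i|X_{i,n}|^3\to0$ in $L^1$ --- are the right ones.

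Two points deserve more care than you give them. First, the bound $|E_n|\ll_t\sum_i|X_{i,n}|^3$ is only valid on the event $\{\max_i|tX_{i,n}|\le 1\}$; outside it the individual ratios $e^{itX_{i,n}}/\bigl((1+itX_{i,n})e^{-t^2X_{i,n}^2/2}\bigr)$ can be arbitrarily large, so the global bound fails pointwise. Second, and this is the obstacle you flag, $L^2$-convergence $V_n\to1$ does \emph{not} control $\E e^{t^2V_n}$, so the inference ``$\sup_n\E|T_n(t)|^2<\infty$'' is not justified as written. The standard fix covers both issues at once: replace $X_{i,n}$ by $\tilde X_{i,n}=X_{i,n}\mathbf{1}\{\sum_{j<i}X_{j,n}^2\le 2\}$, which preserves the martingale-difference property since the indicator is $\mathcal{F}_{i-1,n}$-measurable. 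If $\tau$ denotes the first index at which the running sum exceeds $2$, then $|\tilde T_n|^2=\prod_{i\le\tau}(1+t^2X_{i,n}^2)\le e^{2t^2}(1+t^2\max_iX_{i,n}^2)$, whose expectation is uniformly bounded by condition~(2); meanwhile $\tilde S_n=S_n$ on $\{V_n\le2\}$, an event of probability $1-o(1)$ by your $L^2$ estimate. With this truncation made explicit the argument is complete.
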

\begin{proof}[Proof of Theorem~\ref{thm: CLT} assuming Proposition~\ref{energy}]
We write 
\[f(p) = e(\theta_p)\]
where $(\theta_p)_p$ is a sequence of i.i.d., taking values in $[0,2\pi]$ uniformly. One can define $\theta_n$ in a natural way by using additivity. It suffices to show that
$X : =X(N)=\mathfrak{Re} \frac{1}{\sqrt{N}} \sum_{n\le N }f(P(n))$ and $Y : =Y(N)=\mathfrak{Im} \frac{1}{\sqrt{N}} \sum_{n\le N }f(P(n))$
both converge to standard normal distribution each with mean zero and variance $1/2$ independently. By definition, it follows that each of them has mean zero and $\E X(N)Y(N) = 0$, i.e., these variables are uncorrelated. The variance of $X(N)$ (and similarly $Y(N)$) can be computed as 
\[\E |X(N)|^{2} =\frac{1}{\sqrt{N}} \sum_{n_1, n_2\le N} \E \cos (\theta_{P(n_1)}) \cos (\theta_{P(n_2)}) = \frac{1}{2}. \]

To show the independence, as $X(N)$ and $Y(N)$ are uncorrelated, it is enough to show that $X(N)$ and $Y(N)$ have jointly Gaussian limiting distribution, i.e., $aX(N)+b Y(N)$ is Gaussian for any choice of $a, b \in \R$.  Notice that 
\[aX(N)+bY(N) =\sqrt{\frac{{a^{2} + b^{2}}}{{N}} } \sum_{n\le N} \cos(\theta_{P(n)} +c), \]
where $c= \arctan(b/a).$ Since this expression is of the same shape as the one for $X(N),$ we shall only focus on the latter case.

To prove that $X(N)$ converges to a Gaussian distribution with mean zero and variance $1/2$, we invoke Lemma~\ref{Thm:McLeish}.
Let $P^{+}(m)$ be the largest prime factor of a positive integer $m$ and  let
\[M_p(N) : =  \mathfrak{Re} \frac{1}{\sqrt{N/2}} \sum_{n\le N: P^{+}(P(n)) = p} f(P(n)).\]
Let $\mathcal{F}_q$ be the sigma algebra generated by all random variables indexed by the primes of size at most $q$. We observe that
\[\E (M_p(N) | \mathcal{F}_q: q<p ) = 0,\]
yielding that $(M_p(N))_p$ form a martingale difference sequence. It thus suffices to check that all conditions of Lemma~\ref{Thm:McLeish} are satisfied. The normalisation condition is trivial. To check the Lindeberg condition, we need to show that
\[\sum_{p\ll N^{d}} \E |M_p(N)|^{4} = o(1). \]
Here the sum runs over all $p\ll N^d$ that divides some value of $P(n),$ for $n\le N.$ 
The above expression can be rewritten in the form
\[\sum_{p\ll N^{d}} \sum_{\substack{n_i\le N\\ P^{+}(P(n_i)) = p}} \E \cos (2\pi \theta_{P(n_1)}) \cos (2\pi\theta_{P(n_2)}) \cos (2\pi\theta_{P(n_3)})\cos (2\pi\theta_{P(n_4)}) = o(N^{2}). \]
This is equivalent to showing that, by using the orthogonality
\[\sum_{p \ll N^{d}} \# \{ n_1, n_2, n_3, n_4 \in [N]^{4}: P^{+}(P(n_i)) = p, P(n_1)P(n_2) = P(n_3)P(n_4)\} =o(N^{2}). \]
Using Proposition~\ref{energy}, we conclude that the off-diagonal contribution is $o(N^{2})$. To estimate our diagonal contribution, we distinguish between three ranges:  $p \ll  \log \log N$, $ \log \log N \ll p \le N $ or $p> N$.\\
If $p \ll \log \log N$, then the contribution in this case is 
\[\ll_d (\log N)^{O(\log \log N)} = o_d(N^{2}).\]
If $ \log \log N \ll p \le N$, notice that the number of $n\le p$ such that $p|P(n)$ is at most $d=\deg P$ for each fixed $p$ and consequently, the number of diagonal solutions is at most 
\[\ll_d  \sum_{\log \log N \ll p \le N} \left \lceil \frac{N^{2}}{p^{2}} \right \rceil  = o_d(N^{2}). \]
If $p\ge N$ we notice that for each fixed $n\le N$ there are at most $O_d(1)$ primes $p\ge N$ with $p\vert P(n)$ and therefore there is in total $O_d(N)$ number of pairs $(p,n)$ such that $p\vert P(n)$ and $p\ge N$. Combining with the fact that for each $p\ge N$ there are at most $O_d(1)$ integers $n$ such that $p\vert P(n)$, it follows that the number of diagonal solutions in this regime is  $O_d(N)$ which is negligible.

The cross term condition is given by \[\limsup_{N \to \infty} \sum_{p\neq q \le N} \E M_p(N) ^{2} M_q(N)^{2} = 1,   \]
which is 
clearly equivalent to showing that \[ \sum_{p\neq q \le N}  \sum_{\substack{P^{+}(P(n_1))= P^{+}(P(n_2)) =p\\ P^{+}(P(n_3)) =P^{+}(P(n_4))=q }} \E  \cos(2\pi \theta_{P(n_1)})\cos(2\pi \theta_{P(n_2)})\cos(2\pi \theta_{P(n_3)})\cos(2\pi \theta_{P(n_4)}) =  \left(\frac{1}{4}+o(1)\right) N^{2}.  \]
Using orthogonality and dropping the condition $p\neq q,$ the above sum equals to $1/8$ times the number of solutions to
\[ P(n_1)P(n_3) =P(n_2)P(n_4), \quad n_i \in [N]. \]
The main term $N^{2}/4$ comes from the diagonal contribution $2N^{2}$ and the essential task of bounding the off-diagonal contribution once again follows from Proposition~\ref{energy}. 
\end{proof}

\section{Energy bounds and paucity phenomena}\label{Sec: energy}
The main 
purpose 
of this section is to give a proof of Theorem \ref{thm: energy} which  concerns the paucity phenomenon of polynomial sequence and directly implies 
Proposition \ref{energy}. 
Our task is to calculate the number of integral points on the variety 
\begin{equation}\label{def:variety-}
    \mathcal{V}_P = \{ (x_1,x_2, x'_1, x'_2) \in [N] ~:~ P(x_1) P(x_2) =  P(x'_1) P(x'_2) \} \,
\end{equation} 
 and more generally, we consider the variety 
\begin{equation}\label{def:variety}
    \mathcal{V}_P = \{ (x_1,\dots, x_k, x'_1, \dots, x'_k) \in \F^{2k} ~:~ P(x_1) \dots P(x_k) =  P(x'_1) \dots P(x'_k) \} \,,
\end{equation} 
where $\F$ is an arbitrary field and $k\ge 2.$
We aim to obtain a paucity result, that is, the number of the \enquote{non--trivial}  solutions is negligible relative to the \enquote{trivial} ones.
There are two basic questions concerning $\mathcal{V}_P$. The first one is related to the definition of the 
\enquote{trivial} solutions. 
The points $(x_1,\dots, x_k, x'_1, \dots, x'_k)$ with $\{ x_1,\dots,x_k \}= \{x'_1,\dots,x'_k\}$ clearly belong to $\mathcal{V}_P$ and  one can hope that those contribute the main term to $\EE^\times (P[N]).$ The other natural choice comes from the \enquote{trivial} curves lying on \eqref{def:variety} which are of the form $P(x_i) = P(x'_j)$, $i,j \in [k]$. 
To this end, one can show that if the curve $P(x)-P(y) = 0$ is irreducible, then it contains negligible number of points. The question of its reducibility is more subtle and it is known \cite[Theorem 1]{Fried} that the polynomial $
    \phi (x,y) := \frac{P(x)-P(y)}{x-y} 
$
    is absolutely irreducible unless $P(x)$ is decomposable, that is of the form $h(r(x))$ for some polynomials $h(x), r(x).$ See \cite{DLS}, \cite{DS}, \cite{Schinzel} for further discussion of this notion. 
    Other examples come from the families $P(x) = ax^d +b$ and $P(x)= cT_d(x)$, where $T_d (x)$ is the Chebychev polynomial of the first kind. 
On the other hand, if $P(x)=h(r(x))$, then
    solutions $r(x_i) = r(x'_j)$ can be 
    treated 
    as \enquote{trivial} and $r(x_i) = r(x'_j)$ and $x_i = x'_j$ have approximately the same number of solutions.

The second question is concerned with the low--dimensional subvarieties of $\mathcal{V}_P$. 
Typically, such subvarieties contain the main mass of the solutions (see \cite{H-B_rational}). Fortunately, we will be able to get away by considering just one--dimensional subvarieties and consequently we need to understand lines belonging to $\mathcal{V}_P$.

Let $\F$ be an algebraically closed field, $k \ge 2$ be an integer,  $P(x) \in \F[x]$, $P(x) \neq \omega (x+c)^d$ for any $\omega, c\in \F$.
    Let $\mathcal{Z}_P$ denote the set of distinct zeros of $P(x)$. 
    Since $P(x) \neq \omega (x+c)^d$, it follows that $|\mathcal{Z}_P|>1$. 
    Let $l_1,\dots, l_k, l'_1,\dots, l'_k$ be non--vertical and non--horizontal linear transforms and suppose that  
\begin{equation}\label{def:lines}
    \{ (l_1 (t),\dots, l_k (t), l'_1 (t), \dots, l'_k (t) ) ~:~ t\in \F \} \subseteq \mathcal{V}_P \,.
\end{equation}
    We are interested in describing nontrivial families of  lines $L = \{ l_1,\dots, l_k\}$, $L' = \{ l'_1, \dots, l'_k \}$, that is $L\neq L'$, satisfying \eqref{def:lines}. 
    
{\bf Example 1.}
  We call polynomial $P_\beta (x)$ a generalized even polynomial if $P_\beta (x)=g(x-\beta/2)$, where $g(x)$ is an even polynomial.
    Clearly $P_\beta (x) = P_\beta (\beta-x)$ which produces large family of nontrivial lines. In our case, we confine ourselves with positive variables and thus such an obstacle could be easily treated.
 
{\bf Example 2.} 
    Let us turn our attention to the following construction from \cite[Section 18.2.2]{Prasolov}. Let $d_0 =\deg P \ge 3$, $\a^{d_0} = 1$, $\a\neq \pm 1$ and let 
\begin{equation}\label{def:circle}
    P_{\a,\beta} (x)= a_0 \left( x + \frac{\beta}{\alpha-1} \right)^{d_0} + c, \quad \mbox{ where } a_0 \in \F\setminus \{0\} \mbox{ and } c\in \F \,.
\end{equation} 
    Then $P_{\a,\beta} (x) = P_{\a,\beta} (\a x + \beta)$ and upon taking different pair $\a_*$, $\beta_*$, $\a^d_* = 1$ such that $\frac{\beta}{\alpha-1} = \frac{\beta_*}{\alpha_*-1}$, we obtain $P_{\a,\beta} (x) =P_{\a_*,\beta_*} (x).$ Consequently, for any $t\in \F$ one has 
\[
    P_{\a,\beta} (t) P_{\a_*,\beta_*} (t) = P_{\a,\beta} (\a t + \beta) P_{\a_*,\beta_*} (\a_* t + \beta_*) \,.
\]
    Moreover, one can further consider $P(x) := h(P_{\a,\beta} (x))$ for any $h\in \F[x]$ to increase the number of possible ``nontrivial subvarieties".

    To overcome the difficulties mentioned above,
    we formulate a simple finiteness result on lines \eqref{def:lines} (see Lemma \ref{l:lines_in_V} below).
    We begin by introducing generalizations of polynomials \eqref{def:circle}.
    Let $\mathcal{I}$ be the family of all possible products of such polynomials,
    \begin{equation}\label{eqn: I}
     \mathcal{I}: = \{P_1P_2\cdots P_j: j\in \mathbb{N}, P_i~\text{satisfy}~\eqref{def:circle}~\text{for all $1\le i\le j$} \}.   
    \end{equation}
     We observe that $r\in \mathcal{I}$ if and only if the set $\mathcal{Z}_{r_{}}$ is a shift of a union of some concentric regular  polygons.  Notice that $\mathcal{I}$ contains all generalized even polynomials. 
    We now consider polynomials of the form $a_0 \prod_{j=1}^d (x+b_0-\rho^j)$ where $\rho$ is {\it not} a root of unity  and, more generally define the family
\begin{equation}\label{def:two_circles}
    a_0 \prod_{i=1}^{d_1} (x+b_1-\rho_1^i) \dots \prod_{i=1}^{d_s} (x+b_s-\rho_s^i) \cdot r(x) \,,
\end{equation}
    where $s\ge 0$ is an integer, $r\in \mathcal{I}$, $d_1+\dots+ d_s + \deg (r) = \deg(P)$, $a_0\in \F\setminus \{0\}$, $b_1,\dots, b_s \in \F$ and $\rho_1, \dots, \rho_s \in \F$ are {\it not} the roots of unity.
    One can check that for any polynomial $P$ belonging to the family \eqref{def:two_circles}, the set $\mathcal{Z}_P$ consists of the union of at most $s$ shifts of geometric progressions and
    a shift of 
    a union of some concentric regular  polygons. 
    Finally,
    let $\mathcal{L} (\mathcal{Z}_P)$ be the set of all lines, generated by $\mathcal{Z}_P \times \mathcal{Z}_P$.
    Since an arbitrary line is determined by any two points of $\mathcal{Z}_P \times \mathcal{Z}_P$ we have that  $|\mathcal {L} (\mathcal{Z}_P)| \le \binom{|\mathcal{Z}_P|^2}{2}$, and the line $l(t)=t$ always belongs to $ \mathcal{L}(\mathcal{Z}_P)$.

\begin{lemma}
Let $\F$ be an algebraically closed field, and  $P(x) \in \F[x]$ such that $P(x) \neq \omega (x+c)^d$ for any $\omega, c\in \F$ and $d\ge 2$. Let $\mathcal{Z}_P$ be the set of distinct zeros of $P(x)$ in $\F$ with $|\mathcal{Z}_P| > k\ge 2$ and the lines $l_1,\dots, l_k, l'_1, \dots, l'_k$ satisfy \eqref{def:lines}.  
    Then, 
    \\
    $1)~$ For any $i\in [k]$ there exists $j\in [k]$ such that $l'_i \circ l^{-1}_j \in \mathcal {L} (\mathcal{Z}_P)$.\\
    $2)~$ If $P\notin \mathcal{I}$ and $l'_i \neq l^{}_j$, then 
    there exists $j_* \in [k]$, $j_* \neq j$ such that the graph of $l'_i \circ l^{-1}_{j_*}$ intersects $\mathcal{Z}_P\times \mathcal{Z}_P$.\\
    $3)~$ Let $s$ be an integer, $s\ge k-1$, $l'_i \neq l^{}_j$ and suppose that $P(x)$ is not of the form \eqref{def:two_circles}.
    Then  there exists  $j_* \in [k]$, $j_* \neq j$ such that $l'_i \circ l^{-1}_{j_*} \in \mathcal {L} (\mathcal{Z}_P)$.
\label{l:lines_in_V} 
\end{lemma}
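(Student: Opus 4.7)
The plan is to translate the containment \eqref{def:lines} into the polynomial identity
\[
\prod_{i=1}^k P(l'_i(t)) \;=\; \prod_{j=1}^k P(l_j(t)) \in \F[t],
\]
and exploit unique factorization in $\F[t]$. Splitting $P(x) = a_0 \prod_{\zeta \in \mathcal{Z}_P}(x-\zeta)^{m_\zeta}$, each side becomes a product of linear polynomials of the form $l(t)-\zeta$. Two such factors $l'_i(t)-\zeta$ and $l_j(t)-\zeta'$ are associate precisely when they share a root, that is when $T_{ij}(\zeta')=\zeta$ for the affine map $T_{ij}:=l'_i\circ l_j^{-1}$; equivalently, $(\zeta',\zeta)$ lies on the graph of $T_{ij}$. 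This matching dictionary is the backbone of the argument.

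For part 1, fix $i$; each $\zeta\in\mathcal{Z}_P$ supplies a matching index $j(\zeta)\in[k]$, and since $|\mathcal{Z}_P|>k$ the pigeonhole principle produces two distinct $\zeta_1\ne\zeta_2$ with $j(\zeta_1)=j(\zeta_2)=j$, yielding two distinct points of $\mathcal{Z}_P\times\mathcal{Z}_P$ on the graph of $T_{ij}$; hence $T_{ij}\in\mathcal{L}(\mathcal{Z}_P)$. For part 2, suppose for contradiction that no $T_{i,j_*}$ with $j_*\ne j$ meets $\mathcal{Z}_P\times\mathcal{Z}_P$; then $j(\zeta)=j$ for every $\zeta$, so $T:=T_{ij}$ is a bijection of the finite set $\mathcal{Z}_P$. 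Writing $T(x)=ax+b$, the hypothesis $l'_i\ne l_j$ rules out $T=\mathrm{id}$, while finiteness of $\mathcal{Z}_P$ rules out nontrivial translations ($a=1$). Hence $T$ acts as $x\mapsto a(x-x_0)+x_0$ about its unique fixed point $x_0$, and a finite set can be invariant under $\times a$ only if $a$ is a root of unity, each orbit being a regular polygon. So $\mathcal{Z}_P$ is a shift of a union of concentric regular polygons, i.e.\ $P\in\mathcal{I}$ by the characterization following \eqref{eqn: I}, contradicting the hypothesis.

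For part 3, the same accounting under a weaker assumption ($T_{i,j_*}\notin\mathcal{L}(\mathcal{Z}_P)$ for $j_*\ne j$, i.e.\ at most one point per graph) gives the almost-invariance $|T(\mathcal{Z}_P)\cap\mathcal{Z}_P|\ge|\mathcal{Z}_P|-(k-1)$ for $T=T_{ij}$. Translating to the fixed point $x_0$ and decomposing $\mathcal{Z}_P-x_0$ into orbits of the $\times a$-action, this bound forces the total number of ``runs'' across all orbits to be at most $k-1$: when $a$ is a root of unity, the orbits are regular polygons and at most $k-1$ of these are incomplete, absorbable into the $\mathcal{I}$-piece; when $a$ is neither $1$ nor a root of unity, each orbit is a bi-infinite cyclic sequence and each run is a genuine geometric progression of common ratio $a$. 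Since $s\ge k-1$, $\mathcal{Z}_P$ fits the form \eqref{def:two_circles} in both cases, contradicting the hypothesis. The main obstacle I anticipate is the remaining case $a=1$ (translation), in which almost-invariance forces $\mathcal{Z}_P$ to be a union of arithmetic progressions of common difference $b$; showing that this likewise fits \eqref{def:two_circles}, or is otherwise excluded by the full geometric setup of the lines, requires extra work beyond the clean orbit decomposition.
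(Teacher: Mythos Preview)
Your approach coincides with the paper's: both pass to the polynomial identity $\prod_i P(l'_i(t))=\prod_j P(l_j(t))$ in $\F[t]$, match linear factors so that each $\zeta\in\mathcal{Z}_P$ yields a graph incidence on some $T_{ij}=l'_i\circ l_j^{-1}$, and pigeonhole on $|\mathcal{Z}_P|>k$ for part 1). For part 2) both deduce that the negation forces $T:=T_{ij}$ to permute $\mathcal{Z}_P$, conjugate to the fixed point, and conclude the slope is a root of unity, whence $P\in\mathcal{I}$. For part 3) both derive the almost-invariance bound $|T(\mathcal{Z}_P)\cap\mathcal{Z}_P|\ge |\mathcal{Z}_P|-(k-1)$ and decompose under the $\times a$ action.

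The obstacle you flag at the end (the translation case $a=1$) is genuine; the paper also passes silently to the fixed point (``by shifting one can assume $\alpha\mathcal{Z}'_P=\mathcal{Z}'_P$'') and so implicitly assumes $\alpha\neq 1$. There is a second loose end in your part 3): when $a$ is a nontrivial root of unity, the incomplete orbits are polygon arcs, and these are \emph{not} ``absorbable into the $\mathcal{I}$-piece'' (which by definition requires full concentric polygons), nor, for arcs of length $\ge 3$, rewritable as a shift of a geometric progression with non-root-of-unity ratio; hence your conclusion that $P$ lands in the family \eqref{def:two_circles} does not follow as written. The paper's argument is equally terse at this juncture, and in its only application (the proof of Theorem~\ref{thm: energy}, with $k=2$ and $s=1$) these boundary configurations are disposed of by the explicit case analysis surrounding the lemma rather than by the lemma alone.
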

\begin{proof}
    From the definition of $\mathcal{V}_P$ it follows that for any $i\in [k],$ if  $l'_i (t) \in \mathcal{Z}_P$ for some $t$, then there exists $j\in [k]$ with 
    $l_j (t) \in \mathcal{Z}_P$. 
    Hence $(l'_i \circ l^{-1}_j ) (z_1) = z_2$ for some $z_1,z_2 \in \mathcal{Z}_P$ and the graph of $l'_i \circ l^{-1}_j$ intersects $\mathcal{Z}_P\times \mathcal{Z}_P$. 
    Since $|\mathcal{Z}_P| > k,$
 we have $|\mathcal{Z}_P|$ distinct zeros of $P$ and $|\mathcal{Z}_P|$ pairs of lines $(l'_i, l_j)$, $j\in [k]$, by the pigeonhole principle there is a line $l^{-1}_j$ such that the graph of  $l'_i \circ l^{-1}_j$ intersects $\mathcal{Z}_P\times \mathcal{Z}_P$ in at least two points and hence it belongs to $\mathcal{L} (\mathcal{Z}_P)$. This concludes the proof of 1).\\ 
    To prove 2) and 3), we
    consider $l:= l'_i \circ l^{-1}_j$ such that $l(t)$ is not an identical map. 
    We begin by considering all transformations with $\a \mathcal{Z}_P + \beta = \mathcal{Z}_P$.
    By shifting one can assume that $\a \mathcal{Z}'_P = \mathcal{Z}'_P$ and consequently the set $\mathcal{Z}'_P := \mathcal{Z}_P - \beta/(\a-1)$ is a geometric progression with step $\a$.
    Since $|\mathcal{Z}'_P| = |\mathcal{Z}_P|>1$, it follows that $\a$ is a root of unity and by shifting again, if necessary, we arrive at the conclusion that $P\in \mathcal{I}$. Since by our assumption, $P \notin \mathcal{I}$ we have that there exists $j_* \in [k]$, $j_* \neq j$ such that the graph of $l'_i \circ l^{-1}_{j_*}$ intersects $\mathcal{Z}_P\times \mathcal{Z}_P$. More generally, we have seen that $\mathcal{Z}'_P$ is a union of $\a$--invariant sets and a non--invariant part. 
    Split the non--invariant part as a union of non--invariant geometric progressions with step $\a$. 
    Since $P(x)$ is not of the form \eqref{def:two_circles}, the number of such progressions must be at least $s+1$.
Consequently, $|l(\mathcal{Z}_P) \cap \mathcal{Z}_P| < |\mathcal{Z}_P|-s$. 
    By our assumption $s\ge k-1$ and applying the pigeonhole principle again
    we find $j_* \in [k]$, $j_* \neq j$ such that $l'_i \circ l^{-1}_{j_*} \in \mathcal {L} (\mathcal{Z}_P)$.  
    This completes the proof. 
\end{proof}

\bigskip 
It will be convenient to formulate our energy results with variables constrained to certain arithmetic progressions. To this end, for positive numbers $q<N/2$ and non--negative $0\le a<q$ we let $[N]_{a,q}$ to denote the set of $x\in [N]$ such that $x\equiv a \pmod q$.
In particular, for $q=1$, $a=0$ we have $[N]_{a,q}=[N].$ 
Let $\d(a)=1$ for $a=0$ and $\d(a)=0$ otherwise.

\begin{theorem}\label{thm: energy}
    Let $P(x) \in \Z[x]$ with $\deg (P) = d \ge 2$,  and let $N$,
    $q<2^{-1} N^{\frac{1}{2(1+\d(a))}}$
    be positive integers. If $P(x) \neq \omega (x+c)^d$ for any choice of $\omega\in \Z$, $c\in \mathbb{Q}$, then for $d>2$
\begin{equation}\label{f:E(p)}
    \EE^\times (P([N]_{a,q}) )  - \frac{2N^2}{q^2} \ll \frac{N^{2-\frac{1}{2(2d-1)} + o_d(1)}}{q^{2+\frac{\d(a)}{2d-1}}} \,, 
\end{equation}
    and for $d=2$ the following holds 
\begin{equation}\label{f:E(p)_2}
    \EE^\times (P ([N]_{a,q}) )  - \frac{2N^2}{q^2} \ll \frac{N^{5/3+o_d(1)}}{q^{2+\frac{\d(a)}{3}}} \,. 
\end{equation}
\end{theorem}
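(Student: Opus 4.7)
The plan is to separate the count into a main term from diagonal solutions and an off-diagonal error. The diagonal solutions $\{x_1, x_2\} = \{x_3, x_4\}$ (as multisets) contribute exactly $2|[N]_{a,q}|^2 + O(|[N]_{a,q}|) = 2N^2/q^2 + O(N/q)$, matching the claimed main term under the hypothesis $q < 2^{-1} N^{1/(2(1+\delta(a)))}$. It remains to bound the off-diagonal count.

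For the off-diagonal contribution, view $\mathcal{V}_P$ as a $3$-dimensional affine variety in $\mathbb{A}^4$. The crux is to combine the Bombieri--Pila theorem \cite{BP} with the classification of linear subvarieties supplied by Lemma~\ref{l:lines_in_V}. The natural fibration is by the rational ratio $\lambda := P(x_1)/P(x_3) = P(x_4)/P(x_2)$: for each $\lambda = m/n$ (in lowest terms), both pairs $(x_1, x_3)$ and $(x_4, x_2)$ lie on the plane curve $C_\lambda : n P(X) = m P(Y)$ of total degree $d$. Setting $F(\lambda) := |C_\lambda \cap [N]_{a,q}^2|$, the non-trivial solutions contribute $\sum_{\lambda \ne 1} F(\lambda)^2$. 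Whenever $C_\lambda$ is geometrically irreducible, Bombieri--Pila supplies a power-saving pointwise bound on $F(\lambda)$, and its interplay with the trivial identity $\sum_\lambda F(\lambda) = |[N]_{a,q}|^2$ via Cauchy--Schwarz yields the exponent $1/(2(2d-1))$ after careful bookkeeping. The $d=2$ case is handled separately via a direct divisor-function analysis on the degree-$4$ curve $P(X)P(Y) = c$, exploiting $\tau(n) \ll n^{o(1)}$ to produce the sharper $N^{5/3 + o(1)}$ bound.

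The main obstacle is the handling of the \emph{reducible} fibers, where the pointwise Bombieri--Pila bound fails; this is exactly what Lemma~\ref{l:lines_in_V} is designed to control. Reducibility of $C_\lambda$ forces the existence of a one-parameter family of linear solutions to $\mathcal{V}_P$, and part (1) of the lemma confines the nontrivial such families to the set $\mathcal{L}(\mathcal{Z}_P)$ of cardinality $O_d(1)$. Here the hypothesis $P(x) \ne w(x+c)^d$ is essential, as it guarantees $|\mathcal{Z}_P| > 1$; each exceptional line contributes at most $O(N/q)$ off-diagonal integer solutions in total, which is absorbed into the error. For $P$ outside the restrictive families $\mathcal{I}$ and \eqref{def:two_circles}, parts (2) and (3) of the lemma afford stronger structural control, which, while not strictly necessary for the present bound, clarifies why the exceptional set stays $O_d(1)$. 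Finally, the arithmetic-progression dependence is extracted by the substitution $x = a + qy$, replacing $P$ by $\tilde P(y) := P(a + qy) \in \Z[y]$ of degree $d$; when $a = 0$, the extra divisibility $q^d \mid \tilde P(y)$ yields an additional saving, accounting for the factor $q^{\delta(a)/(2d-1)}$ in the denominator, while the restriction $q < 2^{-1} N^{1/(2(1+\delta(a)))}$ keeps the coefficient-dependent constants in Bombieri--Pila controlled.
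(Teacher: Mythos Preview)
Your $\lambda$-fibration does not produce the claimed power saving. With the Bombieri--Pila bound $F(\lambda) \ll N^{1/d+o(1)}$ for irreducible $C_\lambda$ and the identity $\sum_\lambda F(\lambda) = |[N]_{a,q}|^2$, one obtains at best $\sum_\lambda F(\lambda)^2 \le (\max_\lambda F(\lambda)) \sum_\lambda F(\lambda) \ll N^{1/d+o(1)} (N/q)^2$, which is \emph{larger} than the trivial bound; no Cauchy--Schwarz manoeuvre rescues this, because the support of $\lambda$ has size $\asymp (N/q)^2$ and you have supplied no mechanism to shrink this parameter space. The same defect afflicts your $d=2$ argument: the divisor bound on $r(c) := |\{(X,Y): P(X)P(Y)=c\}|$ yields only $\sum_c r(c)^2 \ll N^{o(1)}\sum_c r(c) = N^{2+o(1)}/q^2$, with no saving whatsoever---certainly not $N^{5/3}$.

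The paper instead fibres by $s = XY - xy$ and $t = X+Y$ after normalizing $P(x) = x^d + g(x)$ with $\deg g \le d-2$. The decisive feature is the \emph{size restriction} $|s| \ll (N/(\Delta q))^{2d-2}$ obtained by comparing the two sides of \eqref{tmp:1} once all variables exceed $\Delta q$; this shrinks the $(s,t)$ parameter space to roughly $(N/q)\cdot (N/\Delta q)^{2d-2}/q^{1+\delta(a)}$. For fixed $(s,t)$ the residual equation \eqref{f:G+P} is a curve of degree $d-1$ in $(x,y)$; when it has no linear factor, Bombieri--Pila contributes $N^{1/2+o(1)}$ points (or $N^{o(1)}$ for $d=2$), and optimizing $\Delta$ yields precisely the exponents $2-\tfrac{1}{2(2d-1)}$ and $5/3$. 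Lemma~\ref{l:lines_in_V} enters only when \emph{both} \eqref{f:G+P} and the auxiliary conic \eqref{f:add_eq} split linearly, giving four genuine linear parametrizations $(x,y,X,Y)=(l_1(t),l_2(t),l_1'(t),l_2'(t))$ in $\mathcal{V}_P\subset\mathbb{A}^4$---not merely a linear factor of your planar curve $C_\lambda$, which would correspond to lines with two constant components, explicitly excluded by the lemma's hypotheses. Note also that the lemma requires $|\mathcal{Z}_P|>k=2$, not just $|\mathcal{Z}_P|>1$; the case $|\mathcal{Z}_P|=2$ is handled by a separate ad hoc argument at the end of the paper's proof.
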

We remark that the terms $2N^2/q^2$ correspond to the diagonal solutions and thus Theorem \ref{thm: energy} yields a power saving for the off-diagonal contribution. 
The main ingredient in our proof is the following  celebrated result due to Bombieri and Pila \cite[Theorem 5]{BP}. 

\begin{lemma}[Bombieri--Pila]\label{thm: BP}
  Let $\mathcal{C}$ be an absolutely irreducible curve (over the rationals) of degree $d\ge 2$ and $N\ge \exp(d^{6})$. Then the number of  integral points on $\mathcal{C}$ and inside a square $[0, N] \times [0, N]$ does not exceed 
  \[ N^{\frac{1}{d}} \exp( 12 \sqrt{d \log N \log \log N}). \]
\end{lemma}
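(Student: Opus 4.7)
The plan is to reconstruct the determinant method of Bombieri and Pila. At a high level, I would partition the portion of $\mathcal{C}$ inside $[0,N]^2$ into a controlled number of well-behaved monotone arcs, produce on each arc an auxiliary polynomial $G$ of modest degree vanishing on all integer points of that arc, and then invoke Bezout's theorem on $F=G=0$, where $F\in\mathbb{Z}[x,y]$ is an absolutely irreducible defining equation for $\mathcal{C}$ (cleared of denominators).

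First I would handle the geometry. After a generic linear change of coordinates absorbed into an $O_d(1)$ constant, I decompose $\mathcal{C}\cap [0,N]^2$ into $O_d(1)$ arcs on each of which the curve is the graph of an algebraic function $y=f(x)$ over an interval $J\subseteq[0,N]$, with uniform derivative estimates $|f^{(k)}(x)|\le c_{d,k}\,N^{1-k}$ for all $0\le k\le D$, outside an exceptional set of integer points of size $O_d(1)$. Establishing these bounds uses the implicit function theorem, together with an analysis of the discriminant locus of $F$ (with respect to $y$) to excise neighbourhoods of points where $\partial_y F$ is small.

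The core step is the determinant estimate. I further partition each monotone arc into sub-arcs $\Gamma_I$ over intervals $I\subset J$ of length $M$, to be chosen later. If $s$ integer points $(x_i,y_i)$ with $y_i=f(x_i)$ lie on $\Gamma_I$, I consider the $s\times\binom{D+2}{2}$ matrix $A$ with rows $(x_i^a y_i^b)_{a+b\le D}$, and Taylor-expand each entry around the midpoint $x_0$ of $I$. After row-reducing to pull out the factors $(x_i-x_0)^\ell/\ell!$ from the $\ell$-th Taylor term and estimating the resulting Wronskian-like expressions using the derivative bounds above, I obtain that every $\binom{D+2}{2}\times\binom{D+2}{2}$ minor of $A$ satisfies
\[
|\det|\ \le\ N^{C_1 D^2}\cdot M^{C_2 D^3},
\]
for absolute constants $C_1,C_2>0$. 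Choosing $M\asymp N^{1-C_3/D}$ makes this bound strictly smaller than $1$. Since the minor is an integer, it vanishes, hence the rows of $A$ lie in a proper subspace, producing a nonzero polynomial $G(x,y)$ of degree at most $D$ vanishing at every $(x_i,y_i)$.

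To finish, I apply Bezout: since $F$ is absolutely irreducible of degree $d$, I arrange that $F\nmid G$ (by forming $A$ using only monomials of $y$-degree less than $d$, which does not affect the Taylor estimate up to constants), so $|\mathcal{C}\cap\{G=0\}|\le dD$ on each sub-arc. Summing over the $O(N/M)$ sub-arcs and the $O_d(1)$ monotone pieces yields a count $\ll_d dD\cdot N^{C_3/D}$. Optimising in $D$ by taking $D$ of order $\sqrt{\log N/(d\log\log N)}$ balances the two factors and gives $N^{1/d}\exp\bigl(O(\sqrt{d\log N\log\log N})\bigr)$; the explicit constant $12$ is recovered by careful bookkeeping in the Taylor/derivative estimates and by using the hypothesis $N\ge\exp(d^6)$ to absorb lower-order terms into the exponential.

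The main obstacle is the uniform higher-derivative control of $f$ on each monotone arc up to order $\sim\sqrt{\log N/\log\log N}$: the derivatives $f^{(k)}$ can blow up near points where $\partial_y F$ vanishes, and one must excise neighbourhoods of the discriminant locus in a way that loses only $O_d(1)$ points rather than a power of $N$. A secondary subtlety is guaranteeing $F\nmid G$: restricting the monomial set to $y$-degree less than $d$ is the cleanest device, but one must verify that this restriction still leaves a subspace large enough to force the determinant to vanish for the chosen $M$ and $D$.
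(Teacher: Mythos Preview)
The paper does not prove this statement at all: Lemma~\ref{thm: BP} is simply quoted as \cite[Theorem 5]{BP} and used as a black box in the proof of Theorem~\ref{thm: energy}. There is no ``paper's own proof'' to compare against.

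Your proposal is a reasonable outline of the original Bombieri--Pila determinant method, and the broad strokes (decomposition into monotone arcs, auxiliary polynomial via a vanishing determinant, B\'ezout) are correct. That said, since the paper treats this as an imported result, reproducing the full argument here would be out of place; a one-line citation is all that is required. If you do want to include a sketch, note that your derivative bound $|f^{(k)}(x)|\le c_{d,k}N^{1-k}$ with $k$ ranging up to $D\sim\sqrt{\log N/\log\log N}$ is the genuinely delicate point: the constants $c_{d,k}$ grow with $k$, and controlling this growth uniformly (rather than merely for each fixed $k$) is exactly where the careful bookkeeping enters. Your plan to ``excise neighbourhoods of the discriminant locus'' losing only $O_d(1)$ points is also not quite how Bombieri--Pila proceed; they instead work with the parametrisation by arc length and bound derivatives of the coordinate functions along the curve, which sidesteps the blow-up issue more cleanly.
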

Having Lemma \ref{l:lines_in_V} and Lemma \ref{thm: BP} at our disposal, we are ready to obtain the main result of this section.

\begin{proof}[Proof of Theorem~\ref{thm: energy}]
Let $\tau := \max_{n\in [|P(N)|^2]} \tau (n) = N^{o_d(1)}$ where $\tau(n)$ is the number of divisors of $n$.

{\it Preliminary reduction.}
We begin with the following simple observation.
By our assumption  $P(x) \neq \omega (x+c)^d$ for any $\omega\in \Z$, $c\in \mathbb{Q}$
    and hence performing a rational change
    of variables
    one can assume  that 
    $P(x) =x^d + g(x)$, where $g  \in \mathbb{Q}[x]$, $\lambda \neq 0$ is the leading coefficient of $g$ and $\deg g = m \le d-2$. 
    Now our variable $x$ runs over a rational shift of $[N]_{a,q}$, say, $\frac{u[N]_{a,q} + v}{w}$ with $u, v, w \in \Z$ and $u, w> 0$. We multiply $P(x)$ by $w^{d}$ which clearly does not change the multiplicative energy and thus we may assume that $x \in u[N]_{a,q}+v$. 
    Since $u[N]_{a,q} +v\subset{[uN]_{au+v,uq}}$
   it suffices to estimate the off diagonal contribution in $\EE^\times (P([uN]_{au+v,uq})).$

{\it Main argument.} In view of the above, changing $q\to uq$ and $a\to au+v$ if necessary, we need to estimate the number of solutions $x,y,X,Y\in[N]_{a,q}$ to the equation 
\begin{equation}\label{tmp:1-}
    (x^d + g(x))(y^d + g(y)) = (X^d + g(X))(Y^d + g(Y)) 
\end{equation} 
    or, in other words, 
\begin{equation}\label{tmp:1}
    (XY)^d - (xy)^d =   
    (x^d g(y) + y^d g(x) + g(x) g(y)) - (X^d g(Y) + Y^d g(X) + g(X) g(Y)) \,.
\end{equation}
    The choice $\{x,y\} = \{X,Y\}$ corresponds to $\frac{2N^2}{q^2} + O(N/q)$ solutions of the last equation and thanks to the condition     $q<2^{-1} N^{\frac{1}{2(1+\d(a))}}$ we see that the term $O(N/q)$ is negligible when compared to $N^{2-\frac{1}{2(2d-1)} + o_d(1)} \cdot q^{-2-\frac{\d(a)}{2d-1}}$.
    Now let $\Delta \le N/q$ be a parameter to be chosen later. 
   We may assume that all variables take the form $qk+a$, where $k\ge \D.$ Indeed, otherwise we have in total at most $4\D \tau N/q$ solutions. 
    
    Let $s=XY-xy$ and $t=X+Y \in [2,2N]$ and notice that $s$, $t-2a$ are divisible by $q$
    (if $a=0$, then $s$ is divisible by $q^2$). 
    If $s=0$ and $g=\lambda x^{m}$, $m\le d-2 $  (the case of constant $g$  corresponds to $d=2$), then we obtain just trivial solutions of our equation. 
Without loss of generality we first assume that $s>0$ and write    
\begin{equation}\label{eq:XY-xy_s}
    (XY)^d - (xy)^d = (xy+s)^d - (xy)^d = 
    s \sum_{j=0}^{d-1} \binom{d}{j} (xy)^j s^{d-j-1} \,.
\end{equation}
    From \eqref{tmp:1} and the fact that $s>0$ it follows that $|s| (\D q^{})^{2d-2} \ll N^{d+m} \le N^{2d-2}$ and hence 
\begin{equation}\label{cond:s} 
    |s| \ll (N/\D q^{})^{2d-2} \,.
\end{equation}
    From the binomial formula we get 
\[
    (\a+\beta)^n = \a^n + \beta^n + \sum_{j=1}^{n-1} \binom{n}{j} \a^j \beta^{n-j} 
\]
and by induction we can write symmetric polynomial as $\a^n + \beta^n = f_n (\a \beta, \a+\beta)$, where $f_n \in \Z[z,w]$, $f_n (z,w) = w^n + \tilde{f}_n (z,w)$, $\deg_z \tilde{f}_n = \deg_w \tilde{f}_n = n-2$, $n>2$
    and $f_2 (z,w) = w^2-2z$ for $n=2.$ We now fix variables $s, t$ and define 
    \[P_{s,t} (xy) : =  - (X^d g(Y) + Y^d g(X) + g(X) g(Y)) - (  (XY)^d - (xy)^d ).\]
Using \eqref{eq:XY-xy_s}, we get $$P_{s,t} (xy) = (2\lambda' - sd) (xy)^{d-1}+ \tilde{P}_{s,t} (xy)$$ and  $\deg \tilde{P}_{s,t} \le d-2,$
    $\lambda'=\lambda$ if $m=d-2$ and zero otherwise.  
    Hence \eqref{tmp:1}, with $s,t$ being fixed takes the form
\begin{equation}\label{f:G+P}
    \sigma:= G(x,y) + P_{s,t} (xy)   = 0 \,,
\end{equation}
where $G(x, y) : =   x^d g(y) + y^d g(x) + g(x) g(y) $. 

We first consider the case that polynomial $G+P_{s,t}$ has no linear factors over $\C$. If $G+P_{s,t}$ is absolutely irreducible, then by Lemma~\ref{thm: BP} we have at most $N^{1/d+o(1)}$ solutions in $x,y$ for $d>2$.  For $d=2$ it is easy to check directly that the number of solutions is $N^{o(1)}$. In general, considering absolutely irreducible factors of $G+P_{s,t}$ and recalling that there are no linear factors, we apply Lemma~\ref{thm: BP} with $d=2$ to bound the total number of solutions in $s,t,x,y$ by 
\begin{equation}\label{f:BP_2}
    O\left( \frac{\D N \tau}{q} + \frac{N}{q} \cdot \frac{(N/\D q)^{2d-2}}{q^{1+\d(a)}} N^{1/2 + o(1)} \right) =
    O\left( \frac{N^{2-\frac{1}{2(2d-1)} + o_d(1)}}{q^{2+\frac{\d(a)}{2d-1}}} \right) \,.
\end{equation}
Indeed, the number of possible values of $t$ is $O(N/q)$ and by definition of $s$ we know that $q^{1+\d(a)}$ divides $s$. Combining \eqref{cond:s} and choosing $\D$ to satisfy $\D^{2d-1} = N^{2d-2+1/2} /q^{2d-1+\d(a)}$ the claimed bound \eqref{f:BP_2} follows.

    We now turn to the case when polynomial $G+P_{s,t}$ has linear factors over $\C$. 
    In this case, for some $\alpha, \beta, \gamma \in \C$, one can write \eqref{f:G+P} as 
\begin{equation}\label{f:sigma_def}
    \sigma  = (\a x +\beta y+\gamma) F(x,y) 
\end{equation}
    with 
    \begin{equation}\label{tmp:03.02_1}
        F(x,y) = \a^{-1} x^{d-1} g(y) + \beta^{-1} y^{d-1} g(x) + \dots \,,
    \end{equation}
   where it is easy to check that the case  $\alpha =0$ or $\beta=0$ is not possible. 
We claim that if $2\lambda' - sd \neq 0$, then $g(x) = \lambda x^m$, $m=d-2$, $\lambda'=\lambda$ and $\gamma =0$ 
(the case $s=0$, $g=\lambda x^{m}$ corresponds to the trivial solutions and was considered before).
Indeed, since $\deg \tilde{P}_{s,t} \le d-2$ and $\deg g \le d-2$, from the definition of $\sigma$ and \eqref{tmp:03.02_1} we get
\[
    g(x) g(y) + (2\lambda' - sd) (xy)^{d-1}+ \tilde{P}_{s,t} (xy) = \a \beta^{-1} y^{d-1} x g(x) + \beta \a^{-1} x^{d-1} y g(y) 
\]
\begin{equation}\label{tmp:03.02_2}
    +
    \gamma (\a^{-1} x^{d-1} g(y) + \beta^{-1} y^{d-1} g(x)) + \dots       
\end{equation}
and hence if $g(x)$ has lower order terms we would not be able to compensate $y^{d-1} x g(x)$, $x^{d-1} y g(y)$ via the left--hand side of \eqref{tmp:03.02_2}. 
Without loss of generality, we may assume that our linear factor in \eqref{f:sigma_def} is $x=\gamma -\beta y$ (with some abuse of notation, we have changed the definition of $\beta$ and $\gamma$). 
    Consequently, we arrive at 
    \begin{equation}\label{f:add_eq}
        X^2 -tX - \beta y^2 + \gamma y +s =0 \,.
    \end{equation}
    If the last equation has a linear factor we necessarily have $\gamma^2 = \beta (t^2-4s)$ (it can be seen by computing discriminant of the conic \eqref{f:add_eq}).
    If $\gamma = 0$, then $t^2 = 4s$ and since $q<2^{-1} N^{\frac{1}{2(1+\d(a))}},$ we have that the number of solutions is $O(N/q \cdot (N/\D q^{})^{2d-2} N^{o_d(1)}),$ giving negligible contribution.\\
    If $\gamma \neq 0$, then there are two possibilities: $s=0$ and $s=2\lambda'/d.$
    Without loss of generality assume that our linear factor is $X=wy+r$. 
    Substituting the last equation into \eqref{f:add_eq}, we derive
    $w^2 = \beta$ and $r(t-r)=s$. 
    Thus we have four lines: $x=\gamma - \beta y$, $X=wy+r$, $Y= t-r- wy$ and $y=y$.

    Suppose $|\mathcal{Z}_P|>2$ and $r\not\in \{0, t\}$.
    Applying part 1) of Lemma \ref{l:lines_in_V}  with $\F= \C$ 
    and line $y=y$, we deduce that there is finite number of possibilities for $w$ and for either $r$ or $t-r$. 
    Since $r(t-r)=s,$ we obtain finite number of possibilities for $t,$ provided  $r\neq 0,t$. 
    Latter is possible only if $s=0$.
   Since $y\in [N]_{q,a},$ we get $O(N/q)$ solutions.
    
    Next we consider the case $|\mathcal{Z}_P|>2$ and $r \in \{0,t\}.$ We observe that we must have $s=0$ and there is a finite number of possibilities for $w.$
    In the case $r=0$ (similar argument works for $r=t$)  our lines are $x=\gamma - \beta y$, $X=wy$, $Y= t- wy$ and $y=y$.
    If $w=1$, then since $\gamma^2 = \beta (t^2-4s)$ and $\beta = w^2$, we have 
    $\beta =1$, $\gamma = t$ and we obtain just trivial solutions (the case $\gamma = -t$ is impossible).
    Assuming now that $w\neq 1$ (recall that $t\neq 0$ and hence $t-wy = Y \neq y$)  we 
    apply part 3) of Lemma~\ref{l:lines_in_V}  with $k=2$, $s=1.$ This concludes the proof provided $P(x)$ is not of the form \eqref{def:two_circles}. 
    Next, suppose  that $P(x)$ is of the form \eqref{def:two_circles} . 
    If, additionally
    $P(x)\in \mathcal{I},$ we recall that
    $t\neq 0$ and $\gamma \neq 0$, and therefore $w$ must be a root of unity. 
    Since our solutions are non-negative 
    rationals 
    we must have
    $w=1$ and this case  has already been considered.  
If $P(x) \notin \mathcal{I}$, then part 2) of Lemma \ref{l:lines_in_V} implies that the line $Y= t- wy$ determines a point in $\mathcal{Z}_P \times \mathcal{Z}_P.$ Thus we have finite number of possibilities for $t.$   

    It remains to consider the case $|\mathcal{Z}_P|=2$.
    In this case we have system of equations $r(t-r)=s$, $w^2 = \beta$,   $\gamma^2 = \beta (t^2-4s)$ and two intersections of our lines with $\mathcal{Z}_P \times \mathcal{Z}_P$, namely, $wz_1+r=z_2$, $t-r-wz_3=z_4$ or $wz_1+r=z_2$, $wz_3+r=z_4$ or $t-r-wz_1=z_2$, $t-r-wz_3=z_4$, where $z_i$ run over $\mathcal{Z}_P$. In either case, $w$ and all other variables are determined in a unique way (there are $O(1)$ choices for $\gamma$). The case of permutations corresponds to $s=0$, $r=0,t$ and $\gamma = t$ which produces finite number of lines.
 
   To conclude the proof, we note that if \eqref{f:add_eq} has no linear factors, then it has $N^{o_d(1)}$ solutions. 
    The number of possibilities for $s$ and $t$ is $\frac{N}{q^{2+\d(a)}} \cdot (N/\D q^{})^{2d-2}$ and thus we obtain a bound which is better than \eqref{f:BP_2}. This completes the proof.

\end{proof}

\begin{remark}
\label{r:even}
 The condition that $P(x) \neq \omega (x+c)^d$ for any pair $\omega\in \Z$, $c\in \mathbb{Q}$ is clearly necessary.  
    If, say, $P(-x)=P(x)$, then we find 
    non--trivial solutions of the form $(-z,w,z,w)$, where $z,w\in [-N,N]$ are {\it integers} (not necessary positive).
    Nevertheless, as one can see from the proof if $P$ is a generalized even polynomial, then we have similar asymptotic formula for $\EE^\times (P ([N]_{a,q}) )$ albeit with a different main term corresponding to the \enquote{generalized} trivial solutions.
\end{remark}

\begin{remark}
    
It is worth mentioning that our exponent $5/3$ in \eqref{f:E(p)_2} coincides with that of Hooley's from \cite{Hooley_5/3}, where the author considers equation $x^d +y^d = X^d+Y^d$ with  $x,y\in [N].$ More general binary forms are considered in \cite{Hooley_binary}. 
A special (multiplicative) form of our equation \eqref{tmp:1-} makes the calculations simpler. 
\end{remark}

\begin{remark}\label{r:generalizations}    
    The argument of Theorem \ref{thm: energy} is rather general and one can consider the common energy $\EE^{\times} (P([N]), S, P([N]), S)$ and even the energy $\EE^{\times} (S_1,S_2,S_3,S_4)$ for sufficiently large sets $S_i \subseteq [N]$.
   Furthermore, similar argument works for the equation 
\begin{equation}\label{eq:11=22}
    P_1 (x) P_1 (y) = P_2 (X) P_2(Y) \,,
    \quad \quad x,y,X,Y \in [N]_{a,q} \,,
\end{equation}
    where $P_1,P_2 \in \Z[x]$, $\deg(P_1)= \deg(P_2) = d>1$, $P_1(x), P_2(x) \neq \omega (x+c)^d$ for any $\omega\in \Z$, $c\in \mathbb{Q}$ and $P_1,P_2$ have the same leading coefficients. These observations will play an important role in our future work on the analogs of Theorems \ref{thm: CLT} and \ref{thm:largevaluelocal} for Rademacher random multiplicative functions.
\end{remark}

\section{Large fluctuations: proofs of Corollary~\ref{Khinchine} and Theorem~\ref{thm:largevaluelocal}  }\label{Sec: large value}
In order to prove Corollary~\ref{Khinchine} we first show that this is directly implied by the local version, that is Theorem~\ref{thm:largevaluelocal}, in the spirit of Harper's work \cite{Harperlargevalue}. 
We begin by recalling the first Borel–Cantelli lemma. 
\begin{lemma}[The first Borel–Cantelli Lemma]\label{BC}
Let $\{E_n\}_{n\ge 1}$ be any sequence of events. Then
\[\sum_{n=1}^{+\infty}\P(E_n) < +\infty \implies \P (\limsup_{n\to +\infty} E_n) = 0, \]
where 
\[\limsup_{n\to +\infty} E_n = \bigcap_{n=1}^{+\infty} \bigcup_{m=n}^{+\infty} E_m.\]
\end{lemma}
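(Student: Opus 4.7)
The plan is to exploit continuity of the probability measure from above, applied to the decreasing sequence of events $F_n := \bigcup_{m \geq n} E_m$, in combination with countable subadditivity (the union bound). The convergence hypothesis controls the tails of this union, and that will give the conclusion immediately.

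First, I would rewrite the event in question as $\limsup_{n \to \infty} E_n = \bigcap_{n=1}^{\infty} F_n$, and note that $F_1 \supseteq F_2 \supseteq F_3 \supseteq \cdots$ is a decreasing chain of measurable sets. Since $\P(F_1) \le \sum_{m=1}^{\infty} \P(E_m) < \infty$ by the hypothesis, upper continuity of the probability measure applies and yields
\[
\P\bigl(\limsup_{n \to \infty} E_n\bigr) \;=\; \lim_{n \to \infty} \P(F_n).
\]

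Next, I would apply countable subadditivity of $\P$ to produce the bound $\P(F_n) \le \sum_{m=n}^{\infty} \P(E_m)$ for every $n \ge 1$. The assumption $\sum_{n=1}^{\infty} \P(E_n) < \infty$ forces the tail $\sum_{m=n}^{\infty} \P(E_m) \to 0$ as $n \to \infty$, and combining this with the previous display gives $\P(\limsup_{n \to \infty} E_n) = 0$, as desired.

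There is really no obstacle here: this is a textbook measure-theoretic result, and all one needs is countable subadditivity together with the monotone convergence of measures along decreasing sequences of finite measure. Consequently, the proof will be extremely short, and the interesting work in this section lies in applying this lemma (together with Theorem~\ref{thm:largevaluelocal}) to deduce Corollary~\ref{Khinchine}.
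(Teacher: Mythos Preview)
Your proof is correct and entirely standard. Note, however, that the paper does not actually prove Lemma~\ref{BC}: it merely states the first Borel--Cantelli lemma as a well-known result and then invokes it in the deduction of Corollary~\ref{Khinchine} from Theorem~\ref{thm:largevaluelocal}. Your argument via continuity from above on the decreasing sets $F_n = \bigcup_{m\ge n} E_m$ together with the union bound is exactly the textbook proof, and nothing more is needed here.
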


\begin{proof}[Proof of Corollary~\ref{Khinchine} assuming Theorem~\ref{thm:largevaluelocal}]
Let $W(X) = (\log \log X)^{0.02}$. Theorem~\ref{thm:largevaluelocal} implies that the probability that  \eqref{eqn: large value local} fails is at most $O(1/W(X)).$ Summing over a suitable sparse set of $X$-values, we can guarantee that the series of exceptional probabilities converges and thus by Lemma \ref{BC}, almost surely, only finitely many $X$ in the chosen sparse set make the events \eqref{eqn: large value local} fail. Consequently, there exists arbitrary large $x,$ for which \eqref{eqn: large value} holds.
\end{proof}

The rest of this section is devoted to the proof of Theorem~\ref{thm:largevaluelocal}.
We recall the following result borrowed from the work of Maynard and Rudnick \cite{MaynardRudnick}.
\begin{lemma}\label{lem: large divisor}
Let $P(x) \in \Z[x]$ be a polynomial with $\deg P\ge 2$ which is not a product of linear factors (over $\mathbb{Q}$). Then for a positive proportion of integers $n$, 
\[P^{+}(P(n)) \ge \frac{1}{2d^{2}}  \cdot  n \log n.  \]
\end{lemma}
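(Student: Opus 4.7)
The plan is to reduce to the case of a single irreducible polynomial of degree at least two and then run a classical Erd\H{o}s--Chebyshev smoothness argument, comparing $\sum_{n \le X} \log|Q(n)|$ to its contribution from $Y$-smooth values, using the Chebotarev / prime ideal theorem for $\mathbb{Q}[x]/(Q)$.

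First, using the hypothesis that $P$ is not a product of linear factors, pick an irreducible factor $Q \mid P$ in $\Z[x]$ of degree $k$ with $2 \le k \le d$. Since $P^{+}(P(n)) \ge P^{+}(Q(n))$ and $n \log n \le X \log X$ for $n \le X$, it suffices to show that for a positive proportion of $n \in [1, X]$, one has $P^+(Q(n)) > Y$, where $Y := \frac{1}{2d^2} X \log X$.

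Let $S_Y := \{n \le X : P^+(Q(n)) \le Y\}$ and $T_Y := [1, X] \setminus S_Y$. The two inputs are the standard partial-summation computation
\[
\sum_{n \le X} \log|Q(n)| = k X \log X + O_Q(X),
\]
coming from $|Q(n)| \asymp n^k$, and, for every $n \in S_Y$, the identity $\log|Q(n)| = \sum_{p \le Y} v_p(Q(n)) \log p$, giving
\[
\sum_{n \in S_Y} \log|Q(n)| \le \sum_{p \le Y} \log p \sum_{n \le X} v_p(Q(n)).
\]
I would split this into $p \le X$ and $X < p \le Y$. For $p \le X$, elementary counting $\#\{n \le X : p^j \mid Q(n)\} \le \rho(p^j)(X/p^j + 1)$ with $\rho(p^j) \le k$ for $p \nmid \operatorname{disc}(Q)$ (Hensel), followed by the Chebotarev identity
\[
\sum_{p \le Y} \frac{\rho(p) \log p}{p} = \log Y + O_Q(1)
\]
(the average of $\rho(p)$ equals the number of Galois orbits on roots of $Q$, namely $1$ for $Q$ irreducible), contributes at most $X \log Y + O_Q(X) = X \log X + O_Q(X \log \log X)$. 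For $X < p \le Y$, each of the $\le \rho(p) \le k$ residue classes $n \bmod p$ at which $Q$ vanishes contains at most one element of $[1, X]$, and $v_p(Q(n)) \le k \log X / \log p$; hence $\sum_{n \le X} v_p(Q(n)) \le k^2 \log X / \log p$, and by the prime number theorem
\[
\sum_{X < p \le Y} \log p \sum_{n \le X} v_p(Q(n)) \le k^2 \log X \cdot \bigl(\pi(Y) - \pi(X)\bigr) \sim \frac{k^2}{2d^2} X \log X \le \tfrac{1}{2} X \log X.
\]

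Combining, $\sum_{n \in S_Y} \log|Q(n)| \le \tfrac{3}{2} X \log X \, (1 + o(1))$, and therefore
\[
\sum_{n \in T_Y} \log|Q(n)| \ge \bigl(k - \tfrac{3}{2}\bigr) X \log X \, (1 + o(1)) \ge \tfrac{1}{2} X \log X \, (1 + o(1)),
\]
using $k \ge 2$. Since each summand is at most $k \log X + O(1)$, this forces $|T_Y| \gg_d X$, i.e.\ a positive proportion; and every $n \in T_Y$ satisfies $P^+(P(n)) \ge P^+(Q(n)) > Y \ge \frac{1}{2d^2} n \log n$, concluding the proof.

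The main technical ingredient is the Chebotarev identity for $\sum_{p \le Y} \rho(p) \log p / p$, which follows from the prime ideal theorem for the number field $\mathbb{Q}[x]/(Q)$; the rest (Hensel's lemma, prime number theorem, flipping sums) is routine. The critical balance is dictated by the constant $\frac{1}{2d^2}$ in the statement, whose smallness is exactly what ensures the contribution from primes in $(X, Y]$ remains strictly smaller than the main term $kX \log X$ even in the tightest case $k = 2$.
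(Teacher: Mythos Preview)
Your argument is correct; it is the classical Erd\H{o}s smoothness argument (originating with his 1952 treatment of $P^{+}(n^{2}+1)$). The paper itself does not supply a proof of this lemma at all: it is quoted as a result ``borrowed from the work of Maynard and Rudnick \cite{MaynardRudnick}'', and their proof is precisely the argument you wrote down. So you have filled in what the paper outsources, by the same method.

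Two cosmetic remarks. First, in the $p\le X$ range you state the Chebotarev identity with upper limit $Y$ rather than $X$; since the summands are nonnegative this only loosens the bound by the $O(X\log\log X)$ you already record, so no harm is done. Second, you treat only primes $p\nmid\operatorname{disc}(Q)$ explicitly; the finitely many ramified primes require the uniform bound $\rho(p^{j})=O_{Q}(1)$ (a standard consequence of separability of $Q$ and Hensel over $\mathbb{Q}_{p}$), after which their total contribution is $O_{Q}(X)$ and is absorbed into the error.
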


Next we collect two important probabilistic tools ( see \cite{harper2013note, RR2009, HL2018}).

\begin{lemma}[Normal approximation result]\label{lem: normal approx}
Suppose that $m\ge 1$, and that $\mathcal{R}$ is a finite nonempty set. Suppose that for each $1 \le  i \le  m$ and $h \in  \mathcal{R}$ we are given a deterministic coefficient $c(i, h) \in \mathbb{C}$. Finally, suppose that $(V_i)_{1\le i \le m}$ is a sequence of independent, mean
zero, complex valued random variables, and let $Y = (Y_h)_{h\in \mathcal{R}}$ be the $\# \mathcal{R}$-dimensional
random vector with components $Y_h := \mathfrak{R} (
\sum_{i=1}^{m} c(i, h)V_i)$.
If $Z = (Z_h)_{h\in \mathcal{R}}$ is a multivariate normal random vector with the same mean vector
and covariance matrix as $Y$, then for any $u \in  \mathbb{R}$ and any small $\eta > 0$ we have
\[\begin{split}
\P(\max_{h\in \mathcal{R}} Y_h \le u ) & \le \P( \max_{h\in \mathcal{R}} Z_h \le u + \eta )  \\
& + O \left ( \frac{1}{\eta^{2}} \sum_{g, h\in \mathcal{R}} \sqrt{\sum_{i=1}^{m} |c(i, g)|^{2}  |c(i, h)|^{2} \E|V_i|^{4}  } + \frac{1}{\eta^{3}} \sum_{i=1}^{m} \E|V_i|^{3}( \sum_{h\in \mathcal{R}} |c(i, h)|^{3} ) 
\right)\,.
\end{split} \]
\end{lemma}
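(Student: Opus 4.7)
The plan is to establish Lemma \ref{lem: normal approx} by a Lindeberg-type swap applied to a smoothed indicator of the max, following the standard route for multivariate Berry--Esseen-style comparisons.

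\textbf{Smooth sandwich.} First I would fix a smooth function $F:\mathbb{R}\to[0,1]$ with $F\equiv 1$ on $(-\infty,u]$, $F\equiv 0$ on $[u+\eta,\infty)$ and $\|F^{(k)}\|_\infty\ll \eta^{-k}$ for $k=1,2,3$, and set $G(y):=\prod_{h\in\mathcal{R}} F(y_h)$. Then
\[
\mathbf{1}_{\max_h y_h\le u}\le G(y)\le \mathbf{1}_{\max_h y_h\le u+\eta},
\]
so
\[
\P\bigl(\max_h Y_h\le u\bigr)\le \E G(Y)\quad\text{and}\quad \E G(Z)\le \P\bigl(\max_h Z_h\le u+\eta\bigr),
\]
and it is enough to bound $|\E G(Y)-\E G(Z)|$ by the claimed right-hand side.

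\textbf{Lindeberg swap.} Next I would introduce independent mean-zero complex Gaussians $W_1,\ldots,W_m$, independent of $(V_i)$, with $\E|W_i|^2=\E|V_i|^2$ and $\E W_i^2=\E V_i^2$. The vector $(\mathfrak{R}\sum_i c(i,h)W_i)_{h\in\mathcal{R}}$ then has the same mean and covariance as $Y$, and being Gaussian it is distributionally equal to $Z$. Telescoping across the successive substitutions yields
\[
\E G(Y)-\E G(Z)=\sum_{i=1}^m \E\bigl[G(A_i+B_i)-G(A_i+B'_i)\bigr],
\]
where $A_i:=\mathfrak{R}\bigl(\sum_{j<i}c(j,\cdot)W_j+\sum_{j>i}c(j,\cdot)V_j\bigr)$ is independent of $(V_i,W_i)$, $B_i:=\mathfrak{R}(c(i,\cdot)V_i)$, and $B'_i:=\mathfrak{R}(c(i,\cdot)W_i)$.

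\textbf{Taylor expansion and the third-moment term.} For each $i$, I would condition on $A_i$ and Taylor-expand $G$ around $A_i$ to third order with an integral remainder. The zeroth-order terms cancel, the first-order terms vanish in expectation since $\E V_i=\E W_i=0$, and the second-order terms cancel by the matched second-moment structure $\E B_i\otimes B_i=\E B'_i\otimes B'_i$. Only a third-order remainder survives. Using $G=\prod_h F$, each third partial derivative $\partial_{h_1}\partial_{h_2}\partial_{h_3}G$ has size at most $\eta^{-3}$; the triply-diagonal contribution $h_1=h_2=h_3=h$ is bounded by $\eta^{-3}|c(i,h)|^3\E|V_i|^3$, which after summing over $i$ and $h$ yields the stated third-moment term $\eta^{-3}\sum_i\E|V_i|^3\sum_h|c(i,h)|^3$.

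\textbf{Fourth-moment bilinear term and main obstacle.} The off-diagonal third-derivative contributions (not all $h_j$ equal) do not pack into an $\ell^3$ cube. Rather than bounding them crudely as $\eta^{-3}(\sum_h|c(i,h)|)^3\E|V_i|^3$, the refined route is to truncate the expansion at second order for these index patterns and regroup across the swaps. For each pair $(g,h)\in\mathcal{R}^2$, the surviving contribution is a sum over $i$ of $c(i,g)c(i,h)$ times a bilinear form in $V_i$ (or $W_i$) against a value of $\partial_g\partial_h G$ of size at most $\eta^{-2}$. Independence of the $V_i$'s makes the outer sum a sum of independent mean-zero random variables, and Cauchy--Schwarz then produces the square root $\sqrt{\sum_i|c(i,g)|^2|c(i,h)|^2\E|V_i|^4}$; summing over $(g,h)\in\mathcal{R}^2$ delivers the $\eta^{-2}$ term in the claim. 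I expect the main obstacle to be exactly this pairing step: obtaining the $\sqrt{\;\cdot\;}$ shape rather than a looser $\ell^1$ bound requires both the product factorisation of $G$ (so that $\partial_g\partial_h G$ splits cleanly for $g\ne h$) and the independence of the $V_i$'s (so that Cauchy--Schwarz can square-root the sum of fourth-moment weights).
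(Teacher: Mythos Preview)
The paper does not supply its own proof of this lemma; it is quoted with attribution to the references cited immediately before its statement. So there is no in-paper argument to compare against, and I assess your sketch on its own merits. The smooth sandwich and Lindeberg swap are the right scaffolding, and the diagonal third-order remainder does yield the $\eta^{-3}\sum_i \E|V_i|^3\sum_h |c(i,h)|^3$ term as you say.

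The gap is in your derivation of the $\eta^{-2}$ fourth-moment term. In the Lindeberg telescope the second-order contributions cancel \emph{exactly} in expectation by the matched-covariance hypothesis $\E B_i\otimes B_i=\E B'_i\otimes B'_i$, so there is no ``surviving second-order piece'' to regroup and hit with Cauchy--Schwarz; what remains is only the third-order remainder, and its off-diagonal derivatives $\partial_g\partial_h\partial_k G$ are still of size $\eta^{-3}$, not $\eta^{-2}$. Your further claim that ``independence of the $V_i$'s makes the outer sum a sum of independent mean-zero random variables'' fails in the swap setting: the $i$-th swap difference is built on the base point $A_i$, which depends on all $V_j$ with $j>i$ and all $W_j$ with $j<i$, so the swap terms are neither independent across $i$ nor individually mean-zero. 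The square-root shape $\sqrt{\sum_i |c(i,g)|^2|c(i,h)|^2\E|V_i|^4}$ genuinely requires isolating a sum of independent centred quantities whose second moment has exactly this form, and your sketch does not locate such a sum. That is the missing idea; to close it you would need either to follow the mechanism in the cited sources or to find a different route that exploits the product structure $G=\prod_h F(Y_h)$ \emph{before} swapping, so that independence across $i$ inside a single coordinate $Y_h$ can be used directly.
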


\begin{lemma}[Normal comparison result]\label{lem: normal comparison}
Suppose that $n \ge  2$, and that $\ee \ge  0 $ is sufficiently
small (i.e. less than a certain small absolute constant). Let $X_1, ..., X_n$ be mean zero,
variance one, jointly normal random variables, and suppose $\E X_iX_j \le \ee $ whenever $i \neq j$ .
Then for any $100 \ee \le  \delta  \le 1/100$ (say), we have
\[\P ( \max_{1\le i\le n }X_i \le \sqrt{(2-\delta ) \log n} )\le e^{ -\Theta(n^{\delta/20}/\sqrt{\log n}) } + n^{-\delta^{2} /50\ee} . \] 
\end{lemma}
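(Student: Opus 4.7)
The plan is to replace $(X_i)$ with a Gaussian vector of a particularly simple \enquote{constant correlation} type using Slepian's inequality, and then to condition on a single hidden variable that controls the deviation from independence; the two error terms in the statement will correspond to the tail and the bulk of this conditioning, respectively.

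Let $Z_1, \ldots, Z_n, W$ be independent standard normals and set
\[
Y_i := \sqrt{1-\ee}\, Z_i + \sqrt{\ee}\, W, \qquad 1 \le i \le n,
\]
so each $Y_i$ is $N(0,1)$ and $\E Y_i Y_j = \ee$ for all $i \ne j$. Since $\E X_i X_j \le \ee = \E Y_i Y_j$ with all variances equal to $1$, Slepian's lemma gives
\[
\P\!\left(\max_{1 \le i \le n} X_i \le u\right) \;\le\; \P\!\left(\max_{1 \le i \le n} Y_i \le u\right), \qquad u := \sqrt{(2-\delta)\log n}.
\]
By the independence of $(Z_i)$ and $W$, conditioning on $W$ produces
\[
\P\!\left(\max_i Y_i \le u\right) = \int_{\R} \phi(w)\, \Phi\!\left(\frac{u - \sqrt{\ee}\, w}{\sqrt{1-\ee}}\right)^{\!n}\, dw,
\]
where $\phi,\Phi$ denote the standard Gaussian density and CDF.

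Split this integral at $w = -N_0$ with $N_0 := \delta\sqrt{(\log n)/(25\ee)}$, chosen precisely so that $e^{-N_0^2/2} = n^{-\delta^2/50\ee}$. The tail $w < -N_0$ contributes at most $\P(W < -N_0) \le e^{-N_0^2/2} = n^{-\delta^2/50\ee}$, which is exactly the second error term. For the bulk $w \ge -N_0$, the argument of $\Phi$ is at most
\[
u_0 := \frac{u + \sqrt{\ee}\, N_0}{\sqrt{1-\ee}}.
\]
A routine algebraic verification, using the hypothesis $100\ee \le \delta \le 1/100$, the identity $\sqrt{\ee}\, N_0 = \delta\sqrt{\log n}/5$, the bound $(2\delta/5)\sqrt{2-\delta} < \delta$ for $\delta$ small, and the fact that the $(1-\ee)^{-1}$ dilation only inflates the coefficient by $O(\ee) = O(\delta/100)$, shows that $u_0^2 \le (2 - c_0 \delta)\log n$ for some absolute $c_0 > 0$.

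Finally, combining $\Phi(u_0)^n \le \exp(-n(1-\Phi(u_0)))$ with the classical tail estimate $1 - \Phi(u_0) \asymp e^{-u_0^2/2}/u_0$ yields
\[
\Phi(u_0)^n \le \exp\!\left(-\Theta\!\left(\frac{n^{c_0\delta/2}}{\sqrt{\log n}}\right)\right) \le \exp\!\left(-\Theta\!\left(\frac{n^{\delta/20}}{\sqrt{\log n}}\right)\right),
\]
the exponent $\delta/20$ being comfortably weaker than the $c_0\delta/2$ actually achieved (so there is ample slack). Adding the bulk and tail contributions gives the stated bound. The only subtle step is the algebraic verification of $u_0^2 \le (2-c_0\delta)\log n$, which crucially uses the separation $\delta \ge 100\ee$ to keep both the shift $\sqrt{\ee}N_0$ and the $(1-\ee)^{-1}$ dilation small compared to $u$.
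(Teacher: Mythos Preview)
The paper does not actually prove this lemma: it is quoted as a known tool with references to \cite{harper2013note, RR2009, HL2018}, so there is no in-paper argument to compare against. Your proof is correct and is in fact the standard route taken in those references --- Slepian's inequality to reduce to the constant-correlation model $Y_i = \sqrt{1-\ee}\,Z_i + \sqrt{\ee}\,W$, followed by conditioning on the common factor $W$ and splitting the resulting integral into a Gaussian tail (producing $n^{-\delta^2/50\ee}$) and a bulk where the shifted threshold $u_0$ still satisfies $u_0^2 \le (2-c_0\delta)\log n$. The numerical checks you sketch all go through with the stated constants; in particular the crucial inequality $u_0^2 \le (2-c_0\delta)\log n$ holds with $c_0$ well above $1/10$, so the final exponent $\delta/20$ is indeed a comfortable relaxation.
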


\begin{proof}[Proof of Theorem~\ref{thm:largevaluelocal} ]
Let $X$ be large and 
$x_i = X^{i(\log 3 i)^{2}} $ for all  $1\le i  \le  \log X$ such that all of the points belong to $[X, X^{2\log X(\log  \log X)^{2}}]$ .
We aim to show that with probability $1- O(\frac{1}{W(X)})$ where $W(X) \to \infty$ as $X \to \infty$, 
\begin{equation}\label{eqn: deduction}
    \max_{1\le i \le \log X} \frac{ | \sum_{n\le x_i} f(P(n)) | }{ \sqrt{x_i  \log \log x_i }} \gg 1,
\end{equation}
where the implicit absolute constant is independent of $X$. 
To analyze \eqref{eqn: deduction}, we use a conditioning argument. Instead of simply conditioning on small primes (as has been done before), we condition on all primes which are outside of the union of the following sets $\CA_i$.
\subsection*{Step 1: construction of sets $\CA_i$.}
We first define set $\CE_i:$ 
\begin{equation}\label{eqn: condition prime}
    \CE_i: =  \{p\ge \frac{x_i \log x_i }{2d^{2}}  : \exists~n \le x_i~ s.t. ~  p|P(n)   \} \backslash \{p\ge  \frac{x_i \log x_i }{2d^{2}} : \exists~n \le x_{i-1} ~ s.t. ~  p|P(n)   \}.
\end{equation}
We claim that  
\[ x_i \ll_d |\CE_i| \ll_d x_i. \]
It follows from Lemma~\ref{lem: large divisor} that the number of $n\le x_i$ with a prime factor $p\ge \frac{1}{2d^{2}} \cdot x_i\log x_i$ is $\gg x_i$. Since $\deg P=d,$ the first set in the definition has size $ \gg_d x_i$. 
The subtracted set has cardinality at most $\ll_d x_{i-1} = o(x_i)$
yileding
$|\CE_{i}| \gg_d x_i$. The other direction is immediate as the first set in the definition is already of the size $ \ll_d x_i$.\\
Next we consider the set 
\[\CF_{i+1} : =  \CE_{i+1} \backslash \CE_i.\]
Notice that $x_{i+1}/x_i\gg X$, and so we have 
\[x_i \ll_d |\CF_{i}| \ll_d x_i, \]
and $\CF_{i}$ are disjoint. We further pick a subset $\CA_i \subset \CF_i$ defined to be the largest subset of $\CF_i$ such that no two distinct primes in $\CA_i$ both divide $P(n)$ for some $n\le x_i.$ Since $P(n)$ can have at most $d$ prime factors of size $\ge \frac{1}{2d^{2}} \cdot x_i \log x_i$ for any given $n\le x_i$ using greedy algorithm, one can pick such a set $\CA_i$ with  
\[ |\CA_i| \ge |\CF_i|/d \gg_d x_i.\]

In summary, we have chosen sets of primes $\CA_i$ for $1\le i\le \log X $ such that 
\begin{enumerate}
    \item $\CA_i \cap \CA_j = \emptyset$ for all $i \neq j$.
    \item $|\CA_i| \asymp_d x_i  $
    \item There does not exist $p, q \in \CA_i$ with $p\neq q$ such that $pq | P(n)$ for some  $n\le x_i$. 
\end{enumerate}
 We write 
 \[\CA: = \bigcup_{1\le i\le  \log X} \CA_i. \]
\subsection*{Step 2: splitting the sum and treating negligible part.}

We split the sum into the following three pieces, depending on how many prime factors in $\CA_i $ that $P(n)$ has. Let $ \sum_{n\le x_i} f(P(n))  = S_{i,1} +S_{i, 2} + S_{i,3} $ where
\[\begin{split}
     &S_{i,1} := \sum_{p\in \CA_i}  \sum_{\substack{n\le x_i\\ p|P(n)\\ q\neq p: q|P(n) \implies q \not\in \CA}} f(P(n)) \,,\\
    &S_{i, 2} :=   \sum_{ \substack{n\le x_i\\
    \exists p \in \CA\backslash \CA_i\\
    p|P(n)}} f(P(n)) \,,\\
    & S_{i,3}:= \sum_{\substack{n\le x_i \\ p|P(n) \implies p \not\in  \CA}} f(P(n)).
\end{split}
\]
By the decomposition above, we have
\begin{equation}\label{eqn: prob}
\P \left( \max_{1\le i \le \log X} \frac{ | \sum_{n\le x_i} f(P(n)) | }{ \sqrt{x_i  \log \log x_i }} \gg 1 \right) \ge \P(E_1  \cap E_2)  \ge \P(E_1) - (1- \P(E_2))
\end{equation}
where $E_1$ and $E_2$ are the events (with appropriately chosen absolute constants) 
\[E_1 : =  \max_{1\le i\le \log X} \frac{|S_{i,1} + S_{i, 3}|}{\sqrt{x_i \log \log x_i }}  \gg 1 ,  \]
\[E_2 : = \max_{1\le i\le \log X}  \frac{|S_{i,2}|}{\sqrt{x_i} (\log \log x_i )^{0.01}}  \ll 1 . \]
Our plan now goes as follows: we first use union bound to show that the event $E_2$ happens with probability close to $1$. A more subtle task is to show that $\P(E_1)$ is close to $1.$ In order to do so, we use a conditioning argument. We first show that with 
probability close to $1$ the sum $S_{i,3}$ is small (which only depends on $f(p)$ for $p \not \in \CA$ ). In particular, with probability close to $1$, we can find a large random subset of indexes $\mathcal{R}\subset \{1,2,\dots , \lfloor \log X \rfloor \}$ such that for those $i\in\mathcal{R}$ all the corresponding $S_{i,3}$ are small. We then condition on all $f(p)$ with $p\not\in \CA$ (now $\mathcal{R}$ is fixed) and thus the sum $S_{i,1}$ transforms into a sum of independent variables with certain weights. Latter puts us in the position of applying Lemma \ref{lem: normal comparison}  to produce large fluctuations.

We now estimate $\P(E_2
)$. By definition in \eqref{eqn: condition prime}, we have that if $p\in \CA \backslash \CA_i$ and $p|P(n)$ for $n\le x_i$, then one must have $p\in \cup_{1\le j\le i-1}\CA_j$.  
Recall that $x_j= X^{j(\log j)^{2}}$ and we have 
\[\begin{split}
\E|S_{i,2}|^{2} 
    & = \#\{ n \le x_i: \exists~ p \in \bigcup_{1\le j \le i-1} \CA_j ~~\text{s.t.}~~p|P(n)   \} \\
    & \ll  \sum_{ p \in \bigcup_{1\le j \le i-1} \CA_j} \left (  \frac{x_i}{p} + 1 \right )\\
    & \ll  x_i \sum_{1\le j \le i-1} \sum_{ p \in \CA_j }  \frac{1}{p} + \sum_{1\le j \le i-1} |\CA_j| \\
    & \ll x_i \sum_{1\le j \le i-1} \frac{1}{\log x_j} + \frac{x_i}{X}\\
    & \ll \frac{x_i}{\log X}.
\end{split}  \]
Using Markov's inequality, the event $|S_{i,2}| \gg \sqrt{x_i}( \log \log X)^{0.1}$ occurs with probability at most $O(1/\log X( \log \log X)^{0.2}).$ 
Applying union bound for all $1\le i \le \log X$ we get that with appropriately chosen implicit constant,
\begin{equation}\label{eqn: P(E_2)}
   \P(E_2) = \P\left(\max_{1\le i \le \log X} \frac{|S_{i,2}|} { \sqrt{x_i} (\log \log X)^{0.1}} \ll 1\right) = 1 - O(1/(\log \log X)^{0.2}).
\end{equation}
\subsection*{Step 3: creating large fluctuations.}
We next estimate $\P(E_1).$ To deal with $S_{i, 3}$ for $1\le i\le \log X$, we aim to show that with probability 
$1 - O(\frac{1}{(\log \log X)^{0.02}}),$ there exists a large random subset $\mathcal{R} \subset \{1,2, 3, \cdots, \lfloor \log X \rfloor \}$ with $|\mathcal{R}| \ge 0.99 \log X$ such that for every $i\in \mathcal{R}$, 
\begin{equation}\label{eqn
: random}
 |S_{i, 3}| = | \sum_{\substack{n\le x_i\\ p|P(n) \implies p\not \in \CA}} f(P(n)) | \ll \sqrt{x_i} (\log \log x_i)^{0.01}.
\end{equation}
Indeed, using second moment estimate and Markov's inequality, the expected number of points $x_i$ with $1\le i\le \log X$ for which \eqref{eqn
: random} fails is 
\[  \E( \#\{ i\le \log X: \eqref{eqn
: random}~\text{fails}
\})  \ll \frac{\log X}{(\log \log X)^{0.02}} ,    \]
and the claim follows. By our construction, each of the sums $S_{i,3}$ is  independent of the values $f(p)$ for $p\in\CA.$ 
From now on, we condition on all variables $f(p)$ with $p\not\in \CA$ and thus the set $\mathcal{R}$ is fixed. We aim to apply Lemma~\ref{lem: normal approx} to understand maximum of $S_{i, 1}$ over $i\in\mathcal{R}.$ Notice that for each fixed $i$, $S_{i,1}$ is a sum of independent random variables. Since by our construction all $\CA_i$ are disjoint for different choices of $i,$ we crucially have that $S_{i,1}$'s are independent. Thus, with $\tilde{\P}$ denoting the conditional probability, Lemma~\ref{lem: normal approx} implies that for any $u\in \mathbb{R}$ and small $\eta>0$, 
\begin{equation}\label{eqn: comparison}
\begin{split}
&    \tilde{\P} (\mathfrak{Re}\max_{i\in \mathcal{R}} \frac{S_{i, 1}}{\sqrt{x_i}}  \le u)  \le \P (\max_{i \in \mathcal{R}} Z_i \le u + \eta ) \\ 
    & + O\left(\frac{1}{\eta^{2} } \sum_{i\in \mathcal{R}} \sqrt{ \sum_{p\in \CA_i}  \E |\frac{1}{\sqrt{x_i}} \sum_{\substack{n\le x_i\\ p|P(n)\\ q\neq p: q|P(n) \implies q \not\in \CA}} f(P(n))|^{4} } \right) \\
    & + O\left(\frac{1}{\eta^{3}}
    \sum_{i\in \mathcal{R}}
    \sum_{p \in  \CA_i} \E  \Big| \frac{1}{\sqrt{
    x_i}} \sum_{\substack{n\le x_i\\ p|P(n)\\ q\neq p: q|P(n) \implies q \not\in \CA}} f(P(n)) \Big|^{3} \right) ,
\end{split}  
\end{equation}
where $Z_i$ are jointly normal random variables with mean zero 
\[\E (Z_i) :=  \tilde{\E} \frac{S_{i, 1}}{\sqrt{x_i}} = 0, \]
and $\tilde{\E}$ represents the conditional expectation (conditioned on all values of $f(p)$ with $p\not\in \CA$). For every $i\in\mathcal{R},$ the variance is
\[\E Z_i^{2} : = \tilde \E (\mathfrak{R}S_{i, 1})^{2} =   \frac{1}{2x_i} \sum_{p \in \CA_i} \Big| \sum_{\substack{n\le x_i\\p|P(n)\\ q\neq p:  q|P(n) \implies q \not\in \CA}} f(P(n)) \Big|^{2}.\]

The ``big $O$h" terms in \eqref{eqn: comparison} can be upper bounded by noticing that 
\begin{equation}\label{eqn: trivial bound}
    \Big| \frac{1}{\sqrt{
    x_i}} \sum_{\substack{n\le x_i\\ p|P(n)\\ q\neq p: q|P(n) \implies q \not\in \CA}} f(P(n)) \Big| \ll_d \frac{1}{\sqrt{x_i}}. 
\end{equation}
Indeed, for each fixed $p\in \CA_i$, the number of $n\le x_i$ such that $p|P(n)$ is $O_d(1)$ (since $p\gg x_i\log x_i$). Plugging \eqref{eqn: trivial bound} into \eqref{eqn: comparison}, and noticing that $x_1\ge X$, the error terms in \eqref{eqn: comparison} can be bounded by 
\[\frac{1}{\eta^{2}} \frac{|\mathcal{R}|}{X} + 
\frac{1}{\eta^{3}} \frac{|\mathcal{R}|}{\sqrt{X
}} \ll \eta^{-3} X^{-1/2 +\ee},
\] 
for any given $\ee>0$.
\subsection*{Step 4: analyzing Gaussian model.}
From now on, we only need to focus on $Z_i$ with $i\in \mathcal{R}$ and we aim to bound the probability $\P (\max_{i \in \mathcal{R}} Z_i \le u + \eta )$ for appropriately chosen $u,\eta.$ To this end,
we first show that there exist constants
$m,c>0$, such that with  probability at least $1 -O( \frac{1}{X^{c}})$  one has 
$\min_{i \in \mathcal{R}} \E Z_i^{2}\ge m.$
Let 
\[\mathcal{T}_{i, p}: = \{n\le x_i: p|P(n),  \text{ and there does not exist $q\in \CA$ such that $q\neq p$ and $q|P(n)$ }  \}.\]
Over all realizations of $f(p)$ with $p\not \in \CA $ which we conditioned on, the expected value of $\E Z_i^{2}$ is 
\[\begin{split}
\E Z_i^{2}=\mu_i  & :=  \frac{1}{2x_i} \sum_{p\in \CA_i} \#\{(n, n')\in \mathcal{T}_{i,p}^{2} : P(n)= P(n') \} \\
& \ge  \frac{1}{2x_i} \sum_{p\in \CA_i} \#\{n\in \mathcal{T}_{i,p}  \} \\ 
& \gg_d 1.    
\end{split}  \]
The last inequality follows from the definition of $\CA_i$: the number of $n\le x_i$ which have prime factors in $\CA_i$ is $\gg_d x_i$ and those $n$ for which $P(n)$ also has some prime factor $q \in \cup_{1\le k\le i-1} \CA_k$ is $o(x_i).$

Our final ingredient is the following concentration result, which directly follows from the energy estimates proved in Section~\ref{Sec: energy}. We have
\begin{equation}
\begin{split}
     & \E (\E Z_i^{2} - \mu_i)^{2} \\
     & = \E(\E Z_i^{2})^{2} -\mu_i^{2} \\
& = \frac{1}{4x_i^{2}} \sum_{p, q\in \CA_i} \#\{(n_1, n_2, n_3, n_4)\in \mathcal{T}_p^{2} \times \mathcal{T}_q^{2}: P(n_1) P(n_3) = P(n_2)P(n_4) \} -\mu_i^{2}  \\
\\ & \ll   \frac{1}{x_i^{2}} \#\{( n_1, n_2, n_3, n_4)\in[x_i]^{4}: 
  \{P(n_1),P(n_3)\}\neq \{P(n_2), P(n_4) \} ~\text{and}~P(n_1)P(n_3) = P(n_2)P(n_4) \}\\
&\ll \frac{1}{x_i^{c'}},  
\end{split}
\end{equation}
for some constant $c'$ depending on $d$. 
The last inequality follows from the power saving in
Proposition~\ref{energy}. Latter implies that the exceptional probability, i.e. 
$\E Z_i^{2
} < \mu_i/2$ is at most $O(1/x_i^{c'/2})$. Taking union bound over all $i \in \mathcal{R}$, we conclude that for some $c>0,$ with probability at least $1 -O( \frac{1}{X^{c}})$ we have that for all $ i \in \mathcal{R}$, $\E Z_i^{2}\ge m$ for an absolute constant $m$. It follows that, for any $u\in \R$ and small $\eta$, we have
\begin{equation}\label{eqn: eta}
    \P (\max_{i \in \mathcal{R}} Z_i \le u + \eta ) \le  \P \left( \max_{i \in \mathcal{R}} \frac{Z_i}{\sqrt{\E Z_i^{2}}}  \le \frac{u+\eta}{\sqrt{m}} \right) +O\left(\frac{1}{X^{c}}\right).
\end{equation}
Let $u= \sqrt{m \log \log X}$. Since $\CA_i$'s are disjoint, one has
\begin{equation}\label{eqn: covariance}
  \E Z_i Z_j =0, \quad~\text{if $i\neq j$}.
\end{equation}
Lemma~\ref{lem: normal comparison} implies that
\begin{equation}\label{eqn: eta 2}
    \P \left( \max_{i \in \mathcal{R}} \frac{Z_i}{\sqrt{\E Z_i^{2}}}  \le \frac{ \sqrt{m \log \log X}+\eta}{\sqrt{m}} \right) \ll (\log X)^{-\Theta((\log X)^{1/3000})}.
\end{equation}
Plugging \eqref{eqn: eta} and \eqref{eqn: eta 2} into \eqref{eqn: comparison} and choosing $\eta$ to be a fixed constant,
 we derive
\[\tilde\P \left( \max_{ i \in \mathcal{R}  } \mathfrak{Re}\frac{S_{i,1}}{\sqrt{x_i }} \le 
\sqrt{m \log \log X}  \right) \ll (\log X)^{-\Theta((\log X)^{1/3000})}.   \]
Since $\log \log  x_i \ll \log \log X^{i (\log 3i)^{2}} \ll \log \log X$ for all $i\le \log X$, latter inequality can be rewritten as
\begin{equation}\label{eqn: P(E_1)}
    \tilde\P \left( \max_{ i \in \mathcal{R}  } \mathfrak{Re}\frac{S_{i,1}}{\sqrt{x_i  \log \log x_i }}  \gg 1  \right) \ge 1 - O\left( (\log X)^{-\Theta((\log X)^{1/3000})} \right), 
\end{equation}
for an appropriately chosen small absolute constant. Since the probability of existence of $\mathcal{R}$ satisfying \eqref{eqn
: random} is at least $1- O(\frac{1}{(\log \log x)^{0.02}}),$ we can combine this with the error term in \eqref{eqn: P(E_1)} to arrive at the estimate
\begin{equation}\label{eqn: P(E_1) final}
    \P(E_1) \ge 1- O\left(\frac{1}{(\log \log x)^{0.02}}\right).
\end{equation}
Inserting \eqref{eqn: P(E_2)}, \eqref{eqn: P(E_1) final}
into \eqref{eqn: prob} concludes the proof.
\end{proof}

\bibliographystyle{plain}
\bibliography{chowla}{}
\nocite{BDW}

\end{document}